\numberwithin{equation}{section}
\newtheorem{theorem}{Theorem}[section]
\newtheorem{lemma}[theorem]{Lemma}
\newtheorem{proposition}[theorem]{Proposition}
\newtheorem{corollary}[theorem]{Corollary}
\newtheorem{definition}[theorem]{Definition}
\newtheorem{assumption}[theorem]{Assumption}
\newtheorem{remark}[theorem]{Remark}
\newcommand{\Ev}{\mathbf{E}}
\newcommand{\Pv}{\mathbf{P}}
\newcommand{\CA}{\mathcal {A}}
\newcommand{\CD}{\mathcal {D}}
\newcommand{\CF}{\mathcal {F}}
\newcommand{\CP}{\mathcal {P}}
\newcommand{\CS}{\mathcal {S}}
\newcommand{\CH}{\mathcal {H}}
\newcommand*{\la}{\lambda}
\newcommand*{\Zb}{\mathbb Z}
\newcommand*{\un}[1]{\underline{#1}}
\newcommand*{\be}{\begin{equation}}
\newcommand*{\ee}{\end{equation}}
\newcommand*{\ba}{\begin{aligned}}
\newcommand*{\ea}{\end{aligned}}
\newcommand*{\barr}{\begin{array}{c}}
\newcommand*{\earr}{\end{array}}
\newcommand*{\ind}{\mathbf{1}}
\newcommand*{\wla}{\widetilde \lambda}
\newcommand*{\hla}{\hat \lambda}
\newcommand*{\Th}{\mathrm{th}}
\def \ind     {1\!\!1}
\newcommand{\indd}[1]   {\ensuremath{ \ind \left[ #1\right]}}
\newcommand{\prob}[1]    {\ensuremath{\mathbf{P}\left[#1\right]}}
\newcommand{\expect}[1]  {\ensuremath{\mathbf{E}\left[#1\right]}}
\newcommand{\condprob}[2]    {\ensuremath{\mathbf{P}\left[#1\,\big|\,#2\right]}}
\newcommand{\condexpect}[2]  {\ensuremath{\mathbf{E}\left[#1\,\big|\,#2\right]}}
\def \PP    {\mathbf{P}}
\def \toinp    {\buildrel {\PP}\over{\longrightarrow}}
\def \toindis  {\buildrel {d}\over{\longrightarrow}}
\def \toas     {\buildrel {a.s.}\over{\longrightarrow}}
\begin{document}
\begin{frontmatter}
\title{First Passage Percolation on Inhomogeneous Random Graphs}
\runtitle{FPP on Inhomogeneous RG-s}

\begin{aug}

\runauthor{I. Kolossv\'ary and J. Komj\'athy }

\author{Istv\'an Kolossv\'ary \thanks{This research project was supported by the TÁMOP-4.2.2.C-11/1/KONV-2012-0001**project, supported by the European Union,
co-financed by the European Social Fund}}
\address{Budapest University of Technology and Economics
\tt{istvanko@math.bme.hu}}

\author{J\'ulia Komj\'athy \thanks{This research project was supported by the grant
 KTIA-OTKA  $\#$ CNK 77778, funded by the Hungarian National Development Agency (NFÜ)  from a
source provided by KTIA.}}
\address{
\tt{komyju@math.bme.hu}}

\affiliation{Budapest University of Technology and Economics \\
 Inter-University Centre for Telecommunications and Informatics 4028 Debrecen, Kassai út 26.}

\end{aug}
\date{\today}

\begin{abstract}
 We investigate first passage percolation on inhomogeneous random graphs. The random graph model $G(n,\kappa)$ we study is the model introduced by Bollobás, Janson and Riordan in \cite{BB_IHRGM}, where each vertex has a type from a type space $\mathcal{S}$ and edge probabilities are independent, but depending on the types of the end vertices. Each edge is given an independent exponential weight. We determine the distribution of the weight of the shortest path between uniformly chosen vertices in the giant component and show that the hopcount, i.e.\, the number of edges on this minimal weight path, properly normalized follows a central limit theorem.
We handle the cases where $\wla_n\to\wla$ is finite or infinite, under the assumption that the average number of neighbors $\wla_n$ of a vertex is independent of the type. The paper is a generalization of \cite{RvdH_FPP} written by Bhamidi, van der Hofstad and Hooghiemstra, where FPP is explored on the Erd\H os-R\'enyi graphs.
\end{abstract}

\begin{keyword}[class=AMS]
\kwd[Primary ]{05C80}
\kwd{90B15}
\kwd{60J85}
\end{keyword}

\begin{keyword}
\kwd{inhomogeneous random graphs, shortest weight path, hopcount, first passage percolation, continuous-time multi-type branching process}
\end{keyword}

\end{frontmatter}

\maketitle


\section{Introduction and the main results}\label{sec::intro&results}

First passage percolation (FPP), generally speaking, deals with the asymptotic behavior of first-passage times of percolating fluid in some random environment. This topic has gained much attention due to its application in various fields such as interacting particle systems, statistical physics, epidemic models and real-world networks, just to name a few.

Particularly, two quantities of interest for FPP on finite weighted random graph models are the minimal weight of a path between two vertices $x$ and $y$ and the number of edges, often referred to the hopcount, on this path. Without edge weights these quantities coincide. Other natural questions can be to determine the flooding time of the graph from a fixed vertex $x$ or its diameter, i.e. the maximum of the shortest paths between $x$ and all other vertices and the maximum of the flooding times, respectively. This paper investigates FPP on the inhomogeneous random graph (IHRG) model introduced in \cite{BB_IHRGM} with independent identically distributed (i.i.d.) exponential edge weights with rate one.

The addition of edge weights on the network can be interpreted as the cost of carrying the flow from one node to the other along the edge. Furthermore, edge weights can dramatically alter the geometry of the graph. For example, consider the complete graph on $n$ vertices first without edge weights. The hopcount between any two vertices is of course one. However, by adding i.i.d. exponential Exp(1) or uniform U(0,1) edge weights, the weight of the shortest-weight path is of order $\log n/n\ll1$ and the hopcount is about $\log n$ \cite{Janson_CompleteGraph}. A similar phenomena can be observed for the IHRG, see Subsection \ref{subsec::mainres}.

The proofs usually rely on results from branching processes. The use of exponential weights imply that the exploration processes of the graph are Markovian. Only recently was FPP studied on random graphs with general continuous edge weights \cite{RvdH_FPP_Generalweights}. Other related results are discussed in Subsection \ref{subsec::relatedwork}. We begin by introducing the IHRG model in Subsection \ref{subsec::model}. This section is continued with the statements of our main results in Subsection \ref{subsec::mainres} and then the main ideas of the proofs are sketched in Subsection \ref{subsec::sketch&structure}.

\subsection{The model}\label{subsec::model}

The random graph model we consider is a general inhomogeneous random graph model introduced by Bollob\'as, Janson and Riordan in \cite{BB_IHRGM}. We briefly describe the model $G(n, \kappa)$ on $n$ vertices and kernel $\kappa$ in the general setting and then turn to an important special case.

Each vertex of the graph will be assigned a type from a separable metric space $\CS$ which is equipped with a Borel probability measure $\mu$. For each $n$ we have a deterministic or random sample of $n$ points $\mathbf{x}_n=(x_1,\ldots,x_n)$ from $\CS$. We assume that the empirical distribution
\[\nu_n:= \dfrac{1}{n}\sum_{i=1}^n\delta_{x_i}\]
converges in probability to $\mu$ as $n\to\infty$, where $\delta_x$ is the measure consisting of a point mass of weight 1 at $x$. This convergence condition is equivalent to the condition
\begin{equation}\label{eq::n_t/n_to_mugeneral}
\nu_n(S):= \dfrac{\#\{i: x_i\in S\} }{n} \toinp \mu(S),
\end{equation}
for every $\mu$-continuity set $S\subset\CS$ (i.e. $S$ is measurable and there is no mass on the boundary of $S$). The pair $(\CS,\mu)$ is called a ground space and for a sequence $(\mathbf{x}_n)_{n\geq1}$ satisfying \eqref{eq::n_t/n_to_mugeneral} we say that the triplet $(\CS,\mu,(\mathbf{x}_n)_{n\geq1})$ defines a vertex space {\large$\nu$}. Further, a kernel $\kappa$ on a ground space is a symmetric non-negative measurable function on $\CS\times \CS$. The natural interpretation of $\kappa$ is that it measures the density of edges.

The simple random graph $G(n,\kappa)$ for a given kernel $\kappa$ and vertex space {\large$\nu$} is defined as follows. An edge $\{ij\}$ (with $i\neq j$) exists with probability
\begin{equation}\label{eq::edge_prob_pijgeneral}
  p_{ij} := \min\left\{\frac{\kappa(x_i, x_j)}{n}, 1\right\}.
\end{equation}
Independently of all randomness each edge is given an Exp(1) edge weight.

We want to exclude cases where the vertex set of $G(n,\kappa)$ can be split into two parts so that the probability of an edge from one part to the other is zero, i.e. we want to ensure the emergence of a single giant component later. To do so, further restrictions are needed for the kernel $\kappa$.
\begin{definition}\label{def::irred_kernel}
A kernel $\kappa$ on a ground space $(\CS,\mu)$ is irreducible if
\begin{equation*}
A\subseteq \CS \text{ and } \kappa=0 \text{ a.e. on } A\times(\CS\setminus A) \text{ implies } \mu(A)=0 \text{ or } \mu(\CS\setminus A)=0.
\end{equation*}
As a slight modification we say that $\kappa$ is quasi-irreducible if there is a $\mu$-continuity set $\CS'\subseteq\CS$ with $\mu(\CS')>0$ such that the restriction of $\kappa$ to $\CS'\times\CS'$ is irreducible and $\kappa(x,y)=0$ if $x\not\in\CS'$ or $y\not\in\CS'$.
\end{definition}
$G(n,\kappa)$ is a sparse graph, i.e. the number of edges $e(G(n,\kappa))$ is linear in $n$, since
\[\expect{e(G(n,\kappa))} = \expect{ \sum_{i<j}\min\left\{\kappa(x_i, x_j)/n, 1\right\} },\]
tends to $n\big(\frac{1}{2} \iint \kappa\big)$ under certain conditions ensuring this "well-behavior" (see \cite[Lemma 8.1]{BB_IHRGM}). This is formulated in the notion of graphical kernels.
\begin{definition}\label{def::graphical_kernel}
A kernel $\kappa$ is graphical on a vertex space $(\CS,\mu,(\mathbf{x}_n)_{n\geq1})$ if it is continuous  almost everywhere (a.e.) and in $L^1(\CS\times\CS,\mu\times \mu)$, furthermore
\begin{equation}\label{eq::expected_nof_edges}
      \dfrac{1}{n} \expect{e(G(n,\kappa))} \to \dfrac{1}{2} \iint_{\CS^2} \kappa(x,y)\mathrm{d}\mu(x)\mathrm{d}\mu(y).
      \end{equation}
\end{definition}
For example, condition \eqref{eq::expected_nof_edges} holds whenever $\kappa$ is bounded and $\nu$ is a vertex space. An important ingredient in the proof will be the use of approximating kernels, where $\kappa$ depends on $n$. We say that a sequence $\kappa_n$ of kernels on $(\CS,\mu)$ is graphical on $\nu$ with limit $\kappa$ if, for a.e. $(y,z)\in\CS^2$,
\[y_n\to y \text{ and } z_n\to z \text{ imply that } \kappa_n(y_n,z_n)\to \kappa(y,z),\]
$\kappa \in L_1$ and continuous a.e., and
\begin{equation}\label{eq::expected_nof_edges_general}
  \dfrac{1}{n} \expect{e(G(n,\kappa_n))} \to \dfrac{1}{2} \iint_{\CS^2} \kappa(x,y)\mathrm{d}\mu(x)\mathrm{d}\mu(y).
\end{equation}

The approximation of a general kernel will be done with an appropriate sequence of step functions. This motivates the special case of regular finitary kernels: the type-space $\CS$ has a finite partition into ($\mu$-continuity) sets $S_1, \ldots,S_r$ such that $\kappa$ is constant on each $S_i\times S_j$ for all $1\le i,j\le r$. By identifying each $S_i$ with a single point $i$ with weight $\mu_i=\mu(S_i)$, a random graph $G(n,\kappa)$ generated by a regular finitary kernel has the same distribution as a finite-type graph. If the type-space $\CS = \{1,2,\ldots,r\}$, and $n_t$ stands for the number of type $t$ vertices (so $\sum_{t\in \CS}n_t=n$), condition \ref{eq::n_t/n_to_mugeneral} becomes
\begin{equation}\label{eq::n_t/n_to_mu}
  \dfrac{n_t}{n} \toinp \mu_t \quad \text{holds for every } t\in\CS,
\end{equation}
and $\kappa= \left(\kappa(s,t)\right)_{s,t=1}^r$ is a symmetric non-negative $r \times r$ matrix. Note that finite-type kernels are automatically graphical. Further note that the Erdős-Rényi (ER) random graph is a special case of a finite-type graph when $r=1$ and $\kappa = c$. Then the probabilities $p_{ij}$ are all simply $c/n$ (for $n>c$).

We make an important assumption on the random graphs $G(n,\kappa)$. We will assume that asymptotically the average degree of a vertex is independent of its type. This is  referred to as the homogeneous case in \cite[Example 4.6]{BB_IHRGM}. In this case the global behavior of $G(n, \kappa)$ in the limit is the same as of the ER random graph, but the local behavior can be quite different. In the general setting this assumption can be formulated as
\begin{equation}\tag{AG}\label{ass:integral_la+1_general}
\int_{\CS} \kappa(x,y)\mathrm{d}\mu(y) = \wla+1 \quad \text{for a.e. } x.
\end{equation}
An example for a kernel $\kappa$ satisfying \eqref{ass:integral_la+1_general} can be given by taking $\CS=(0,1]$ (interpreted as the 1 dimensional torus $\mathbb{T}^1$), $\mu$ as the Lebesgue measure and $\kappa(x,y)=h(d(x,y))$ for an even function $h\geq0$ of period 1, where $d(x,y)$ is the metric given on $\CS$.
Figures \ref{fig::homogen1} and \ref{fig::homogen2} show the contours of two such examples, where the dark purple strips indicate where the contour is zero.

\begin{figure}[ht]
\begin{minipage}[ht]{0.47\columnwidth}
\centerline{\includegraphics[width=0.99\columnwidth]{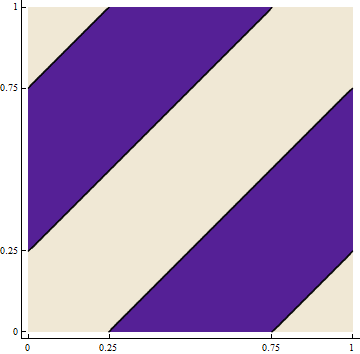}}
\caption{$h(z)=\indd{\,|z|<0.25}$  \label{fig::homogen1}}
\end{minipage}
\hfill
\begin{minipage}[ht]{0.47\columnwidth}
\centerline{\includegraphics[width=0.99\columnwidth]{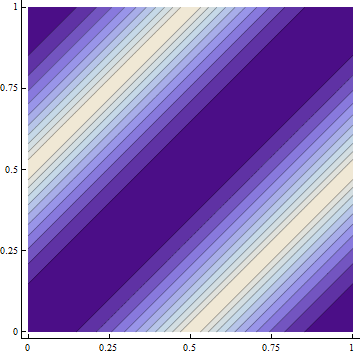}}
\caption{$h(z)=z^2$\label{fig::homogen2}}
\end{minipage}
\end{figure}

In the finite-type case we use the following notation:
\begin{equation}\label{eq::lambda_st_def}
  \lambda_{st} := \kappa(s,t)\mu_t,
\end{equation}
for all $s,t\in\CS$. The number of type $t$ neighbors of a type $s$ vertex are binomially distributed with parameters $n_t-\delta_{st}$ and $\kappa(s,t)/n$ ($\delta_{st}=1$ if and only if $s=t$). Thus from \eqref{eq::n_t/n_to_mu} we get that in the limit $\lambda_{st}$ gives us the average number of type $t$ neighbors of a type $s$ vertex. From here we construct the matrix
\be\label{eq::A_matrix}
  A = 
  \begin{pmatrix}
    \lambda_{11}-1 & \lambda_{12}   & \cdots & \lambda_{1r} \\
    \lambda_{21}   & \lambda_{22}-1 & \cdots & \lambda_{2r} \\
    \vdots  & \vdots  & \ddots & \vdots  \\
    \lambda_{r1}   & \lambda_{r2}   & \cdots & \lambda_{rr}-1
  \end{pmatrix}.
\ee

\begin{assumption}\label{ass::sorosszeg_la}
We will assume that the row sums of the matrix $A$ are the same and equal to $\wla>0$. Also assume that $A$ is irreducible, i.e. there exists a $k_0$ s.t. $A^{k_0}$ has strictly positive entries.
\end{assumption}
In the finite-type case Assumption \ref{ass::sorosszeg_la} is the equivalent of \eqref{ass:integral_la+1_general}. The $\wla>0$ condition is necessary and sufficient for a giant component to emerge in $G(n,\kappa)$ \cite[Theorem 3.1]{BB_IHRGM}. Let $\pi$ denote the normalized left eigenvector corresponding to the eigenvalue $\wla$ of $A$. Under Assumption \ref{ass::sorosszeg_la} we find that
\begin{equation}\label{eq::pi==mu}
\pi=\mu.
\end{equation}
Indeed, using the symmetry of $\kappa$,  $\mu A=\wla\mu$ follows immediately.

\vspace{1mm}
Let us introduce some standard notation. Let $\text{Bin}(n,p), \,\text{Poi}(\lambda), \,\text{Exp}(\mu)$ respectively denote a binomial, a Poisson and an exponential random variable with the parameters having their usual meaning. Convergence almost surely, in distribution and in probability are denoted by $\toas, \,\toindis, \,\toinp$ respectively. A sequence of events holds with high probability (whp), if it holds with probability tending to $1$ as $n\to\infty$. We use the Landau symbols $O$ and $o$ with their usual meaning. We say that a sequence of random variables $X_n$ satisfies $X_n=O_{\PP}(b_n)$ if $X_n/b_n$ is tight (i.e. $\forall \varepsilon>0 \, \exists M: \, \prob{X_n>M b_n}<\varepsilon$) or $X_n=o_{\PP}(b_n)$ if $X_n/b_n\toinp 0$. Now we turn to the main results.
\vspace{1mm}

\subsection{Main results}\label{subsec::mainres}

We investigate the weight and the number of edges on the shortest-weight path between two uniformly picked vertices $x$ and $y$. Let $\Gamma_{xy}$ denote the set of all $\pi$ paths in $G(n,\kappa)$ between $x$ and $y$. Denote the weight of the shortest-weight path by
\begin{equation}\label{eq::P_ndef}
\CP_n = \min_{\pi\in\Gamma_{xy}} \sum_{e\in\pi}X_e,
\end{equation}
where $X_e$ is the exponential edge weight attached to edge $e$ in the construction of $G(n,\kappa)$. Let $\CH_n$ denote the number of edges or hopcount of this path. If the two vertices are in different components of the graph, then let $\CP_n, \CH_n=\infty$. The following theorem describes the asymptotic behavior of these two quantities.

\begin{theorem}[Asymptotics of Hopcount \& Shortest weight] \label{thm::GeneralSetting}
Let $(\CS,\mu)$ be an arbitrary ground space and $\kappa$ be a uniformly continuous, quasi-irreducible, graphical kernel on $(\CS,\mu)$ that satisfies $\sup \kappa(x,y)<\infty$ and
\[\int_{\CS} \kappa(x,y)\mathrm{d}\mu(y) = \wla+1<\infty \quad \text{for a.e. } x\in\CS.\]
Then the hopcount $\CH_n$  and the minimal weight $\CP_n$ between two uniformly chosen vertices, conditioned on being connected, satisfy
\begin{equation*}
  \left(\frac{\CH_n- \frac{\wla+1}{\wla} \log n}{\sqrt{\frac{\wla+1}{\wla}\log n}} ,\; \CP_n - \frac{1}{\wla}\log n\right)\toindis (Z,L),
\end{equation*}
where $Z$ is a standard normal variable. Furthermore, $L$ is a non-degenerate real valued random variable whose distribution can be precisely determined from the behavior of the multi-type branching process that arises when exploring a component of $G(n, \kappa)$.
\end{theorem}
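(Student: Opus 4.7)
The approach is a multi-type generalisation of the Bhamidi–van der Hofstad–Hooghiemstra strategy from \cite{RvdH_FPP}. The first move is to reduce the general kernel to the finitary case: partition $\CS$ into small $\mu$-continuity pieces on which the uniformly continuous $\kappa$ varies by at most $\eps$, giving a step kernel $\kappa_r$ with $\iint|\kappa-\kappa_r|\,d\mu^2=O(\eps)$. Monotone couplings $G(n,\kappa_r^-)\subseteq G(n,\kappa)\subseteq G(n,\kappa_r^+)$ together with the graphical assumption show that both $\CH_n$ and $\CP_n$ on $G(n,\kappa)$ agree with those on $G(n,\kappa_r)$ up to an $o_{\PP}(1)$ error as $n\to\infty$ and then $\eps\to 0$; in particular \eqref{ass:integral_la+1_general} descends to Assumption \ref{ass::sorosszeg_la} with a vanishing correction, and the rest of the sketch is phrased in the finite-type setting.

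Because the weights are i.i.d.\ $\mathrm{Exp}(1)$, the shortest-weight tree (SWT) grown from a vertex $v$ is Markovian, and while its size is $o(\sqrt n)$ it couples to a continuous-time multi-type Crump–Mode–Jagers branching process $\mathrm{BP}_v$ in which a type-$s$ particle emits type-$t$ offspring according to independent Poisson point processes on $(0,\infty)$ of intensity $\lambda_{st}e^{-u}\,du$. The Laplace moment matrix $M(\alpha)=(A+I)/(\alpha+1)$ has spectral radius $1$ precisely when $\alpha=\wla$ by Assumption \ref{ass::sorosszeg_la}, so $\mathrm{BP}_v$ is supercritical with Malthusian parameter $\wla$, right Perron eigenvector $\mathbf{1}$, and stationary type vector $\mu$ by \eqref{eq::pi==mu}. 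Kesten–Stigum then gives $e^{-\wla t}|\mathrm{BP}_v(t)|\toas W_v$, where $W_v>0$ iff $v$ lies in the giant component, so conditioning on connectivity corresponds to conditioning on both BPs surviving.

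For the weight, run $\mathrm{BP}_x$ and $\mathrm{BP}_y$ in parallel. Since the type distribution of alive particles converges to $\mu$ and $\sum_t\kappa(s,t)\mu_t=\wla+1$ is type-free, each edge emitted from $\mathrm{BP}_x$ hits $\mathrm{BP}_y$ with probability $|\mathrm{BP}_y(t)|/n+o(1/n)$ uniformly in type. The integrated collision rate through time $t$ is asymptotic to $\frac{\wla+1}{2\wla n}W_xW_ye^{2\wla t}$, so the first-collision time satisfies
\[
T_n=\tfrac{1}{2\wla}\log n-\tfrac{1}{2\wla}\log(W_xW_y)+\tfrac{1}{2\wla}\log E+c_{\wla}+o_{\PP}(1),
\]
for an independent $E\sim\mathrm{Exp}(1)$. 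The collision edge itself contributes $O_{\PP}(e^{-\wla T_n})=o_{\PP}(1)$, so $\CP_n=2T_n+o_{\PP}(1)$ and $\CP_n-\frac{1}{\wla}\log n$ has an explicit non-degenerate limit $L$ built from $W_x,W_y,E$. For the hopcount, the spine/many-to-one construction identifies the ancestry of a uniformly sampled alive particle at time $t$ with a renewal process whose gap law has density $e^{-\wla u}\cdot e^{-u}$ normalised to $(\wla+1)e^{-(\wla+1)u}$, i.e.\ $\mathrm{Exp}(\wla+1)$. The generation is a renewal count of mean $(\wla+1)t$ and variance $(\wla+1)t$, asymptotically Gaussian; since $\CH_n$ is the sum of the two conditionally independent generations of the endpoints of the collision edge and $T_n\sim\frac{1}{2\wla}\log n$,
\[
\frac{\CH_n-\frac{\wla+1}{\wla}\log n}{\sqrt{\frac{\wla+1}{\wla}\log n}}\toindis Z\sim\CN(0,1),
\]
jointly with $\CP_n-\frac{1}{\wla}\log n\toindis L$. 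The joint asymptotic independence of $Z$ and $(W_x,W_y,E)$ follows because the renewal fluctuations are driven by gap noise that is independent of both the Malthusian prefactor and the collision clock.

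The main technical obstacle is controlling the SWT-to-BP coupling up to the collision scale $|\mathrm{BP}_v|\asymp\sqrt n$, where the coupling error and the type-wise depletion of fresh vertices become non-negligible. I would resolve this by running the exact BP approximation only up to $t\le (1-\delta)\frac{1}{2\wla}\log n$, and then handle the $O(1)$-length collision window by a second-moment computation on pairs of potential colliding edges. Here the hypotheses $\sup\kappa<\infty$, quasi-irreducibility, and uniform continuity of $\kappa$ each do essential work: the first gives uniform depletion bounds independent of type, the second forces the alive-type distribution to genuinely equidistribute to $\mu$ so that \eqref{eq::pi==mu} trivialises the type dependence of collisions, and the third makes the finitary approximation errors uniform in the types encountered by the SWT.
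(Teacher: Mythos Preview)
Your overall strategy—simultaneous two-sided exploration, Kesten--Stigum for the Malthusian martingale, and a spine/renewal argument for the generation CLT—is a legitimate alternative to the paper's route, which instead freezes the $x$-flow after $a_n$ splits, lets the $y$-flow run until it meets $\CA^x(a_n)$, shows the rescaled collision indices form a $\mathrm{PPP}(\wla)$, and then invokes Kharlamov's discrete-split generation CLT rather than a renewal computation. Both frameworks lead to the same constants; your spine calculation with $\mathrm{Exp}(\wla+1)$ gaps is correct and arguably more transparent than quoting \cite{Kharlamov1}.

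That said, two steps in your sketch are genuine gaps, not just imprecisions.

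\emph{Finitary reduction.} The monotone sandwich $G(n,\kappa_r^-)\subseteq G(n,\kappa)\subseteq G(n,\kappa_r^+)$ controls $\CP_n$ (more edges can only shorten the optimal weight) but does \emph{not} control $\CH_n$: adding an edge can replace the optimal path by one with either more or fewer hops, so there is no sandwich inequality for the hopcount. The paper avoids this by coupling $G(n,\kappa)$ and $G(n,\bar\kappa_m)$ so that their edge sets are \emph{equal} whp: uniform continuity gives $|\kappa-\bar\kappa_m|\le\eps_m\kappa$ uniformly, hence $\Pv[e(G(n,\kappa))\ne e(G(n,\bar\kappa_m))]\le n\eps_m$, and one chooses $m=m(n)$ so that $n\eps_{m(n)}\to 0$. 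Under this coupling $\CH_n=\CH_n^{m(n)}$ whp and the finite-type CLT transfers directly.

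\emph{Identification of $L$.} The assertion ``the collision edge itself contributes $O_{\PP}(e^{-\wla T_n})=o_{\PP}(1)$, so $\CP_n=2T_n+o_{\PP}(1)$'' is incorrect. Whatever your precise definition of collision, the residual—either the remaining $\mathrm{Exp}(1)$ lifetime of the collision edge, or the gap $T_n-d_y(v)$ for the shared vertex—is $O_{\PP}(1)$, not $o_{\PP}(1)$. Moreover the \emph{first} collision is not the optimal one: the candidate paths arrive as a Poisson process and one must minimise over them. The paper makes this explicit: with collisions $\{C_n^{(i)}\}$ and residual lifetimes $E_i\sim\mathrm{Exp}(1)$, $\CP_n=\tau^x_{a_n}+\min_i\{\tau^y_{C_n^{(i)}}+E_i\}$, and the minimum is computed via $\min_i\{\frac{1}{\wla}\log P_i+E_i\}\stackrel{d}{=}-\frac{1}{\wla}X+\frac{1}{\wla}\log(\wla+1)$ for a standard Gumbel $X$. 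Your formula $T_n=\tfrac{1}{2\wla}\log n-\tfrac{1}{2\wla}\log(W_xW_y)+\tfrac{1}{2\wla}\log E+c_\wla$ happens to produce the right \emph{form} for $L$ (since $-\log E$ is Gumbel), but the derivation as written does not justify it; you still need the PPP-plus-optimisation step, not a first-collision argument.
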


A possible extension is to have $\kappa$ depend on $n$. In this case we have a sequence of matrices $A_n$, each satisfying Assumption \ref{ass::sorosszeg_la} with $\wla_n$ as the sum of the rows. Then we can trivially extend our theorems in the following form:

\begin{corollary}\label{cor::MainWithLambda_n}
If $\lim_{n\to \infty} \wla_n = \wla<\infty$ then for the hopcount we have
\[\left(\frac{\CH_n- \frac{\wla+1}{\wla} \log n}{\sqrt{\frac{\wla+1}{\wla}\log n}} ,\; \CP_n - \frac{1}{\wla}\log n \right) \toindis (Z, L),\]
where $Z$ and $L$ are as in Theorem \ref{thm::GeneralSetting}.
\end{corollary}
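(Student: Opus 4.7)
The plan is to replay the proof of Theorem~\ref{thm::GeneralSetting} with the $n$-dependent kernel $\kappa_n$ in place of the fixed $\kappa$, exploiting the fact that the theorem's proof already approximates a general kernel by a sequence of finite-type (regular finitary) kernels and couples the neighborhood exploration of $G(n,\kappa)$ with a multi-type continuous-time branching process whose mean matrix is $A$. When $\kappa$ is replaced by $\kappa_n$ the analogous branching process has mean matrix $A_n$ with row sum $\wla_n$, and Assumption~\ref{ass::sorosszeg_la} combined with Perron-Frobenius for irreducible non-negative matrices ensures that $\wla_n$ is the dominant eigenvalue with left eigenvector $\mu_n=\mu$. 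The standard continuity of the spectral data of irreducible non-negative matrices yields the convergence in distribution $\mathrm{BP}(A_n)\toindis \mathrm{BP}(A)$ together with the convergence of their Kesten-Stigum martingale limits $W_n\toindis W$.

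First I would verify that the coupling between the exploration process and $\mathrm{BP}(A_n)$ goes through unchanged with an $n$-dependent mean matrix; the error bounds used in Theorem~\ref{thm::GeneralSetting} depend only on $\sup_n\wla_n<\infty$ and on the uniform boundedness of the kernels $\kappa_n$. Second, the hopcount CLT in the branching phase uses a martingale CLT whose asymptotic variance is $\frac{\wla_n+1}{\wla_n}\to\frac{\wla+1}{\wla}$, and Slutsky gives the claimed convergence to $Z$. Third, for the shortest weight $\CP_n$ the limit $L$ is expressed as an explicit functional of $W^{(x)}$, $W^{(y)}$ and an exponential waiting time representing the first meeting edge between the two explorations; replacing these by their $\kappa_n$-analogues and applying the continuous mapping theorem reproduces the same limit $L$.

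The main technical nuisance is that the centering constants in the corollary are written in terms of the limiting $\wla$ rather than $\wla_n$. The intrinsic quantity coming out of the proof is $\frac{\wla_n+1}{\wla_n}\log n$ for the hopcount and $\frac{1}{\wla_n}\log n$ for the minimal weight; the substitution $\wla_n\leftrightarrow\wla$ contributes additive errors of order $|\wla_n-\wla|\sqrt{\log n}$ and $|\wla_n-\wla|\log n$ respectively. The statement is therefore naturally read under the implicit refinement $\wla_n-\wla=o(1/\log n)$, under which the two centerings are asymptotically interchangeable and Slutsky's lemma closes the argument. Without such a rate, the cleanest formulation of the corollary has $\wla$ replaced by $\wla_n$ inside the centering, but the branching-process analysis above is identical.
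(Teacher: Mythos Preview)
Your approach is essentially what the paper does, or rather what it declines to do: the paper introduces the corollary with the sentence ``Then we can trivially extend our theorems in the following form'' and gives no separate argument. So replaying the proof of Theorem~\ref{thm::GeneralSetting} with the $n$-dependent matrix $A_n$ is exactly the intended route, and the ingredients you list (the Bin--Poi coupling with error $m(\wla_n+1)\max\kappa_n/n$, Kharlamov's CLT with parameter $(\wla_n+1)/\wla_n$, the split-time asymptotics and the connection-time analysis) all go through uniformly once $\sup_n\wla_n<\infty$ and $\sup_n\max\kappa_n<\infty$.

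Where you add value over the paper is the observation about the centering constants. The paper silently writes $\wla$ rather than $\wla_n$ in the centering, and you are right that this is not free: the hopcount centering discrepancy after normalization is of order $|\wla_n-\wla|\sqrt{\log n}$, and the weight centering discrepancy is of order $|\wla_n-\wla|\log n$. So the corollary as stated genuinely requires $\wla_n-\wla=o(1/\log n)$; otherwise one must keep $\wla_n$ in the centering (exactly as the paper does in Theorem~\ref{thm::DenseSettingMain} for the case $\wla_n\to\infty$). This is a correct and useful caveat that the paper does not make explicit.

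One minor over-elaboration: you do not actually need to invoke convergence $W_n\toindis W$ via continuity of Kesten--Stigum limits. Under Assumption~\ref{ass::sorosszeg_la} the total population in $\Psi_{\kappa_n}$ is a single-type Yule-with-Poisson$(\wla_n+1)$-offspring process, so the law of $W_n$ depends on $\kappa_n$ only through $\wla_n$ (cf.\ the functional equation \eqref{eq::W_distribution}), and $\wla_n\to\wla$ gives the convergence directly. This is the same shortcut the paper uses at the end of Section~\ref{subsec::proofgeneral}.
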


It is interesting to compare these results with its counterpart where there are no edge weights. Then of course $\CH_n=\CP_n$. Theorem 3.14 of \cite{BB_IHRGM} in our context says that in this case the typical distance between two vertices in the giant component still scales as $\log n$, but with a different constant: $1/\log(\wla +1)$. Comparing the constants, we see that
\[\frac{1}{\wla} <\frac{1}{\log (1+\wla)}< \frac{\wla +1}{\wla}.\]
This shows that by adding edge weights the structure of the graph changes. Along the shortest-weight path more vertices are visited ($\CH_n$) than on the path with the least number of vertices. At the same time the weight of the path ($\CP_n$) becomes smaller than the number of edges on the path with the least number of vertices.
The figure below illustrates this, where the red path is the shortest-weight path while the green is the one with the least number of vertices.

\begin{center}
  \begin{tikzpicture}
  [scale=0.45, yscale=0.9,
   csucs/.style={fill=red!20, circle},
   egy/.style={circle, shade, ball color=blue},
   ket/.style={circle, shade, ball color=red},
   har/.style={circle, shade, ball color=green},
   rotate=90, thick, sibling distance=2cm]

  \node[csucs] (x)  at (0,0)   {$x$}
    child[blue] {node[egy] {} child[grow=-115, level distance=2.5cm]{node[egy] {} child[grow=-90]{node[egy] {} child{node[egy] {} child[grow=-80]{node[egy] {} child[grow=-70]{node[egy] {} child[level distance=1.5cm, grow=-60]{node[egy] (A) {} }}}}}}}
    child[blue, level distance=2.5cm] {node[egy] {} child[grow=-105]{node[egy] {} child[grow=-90, level distance=3.5cm]{node[egy] {} child[level distance=1.5cm]{node[egy] {} child[level distance=2.5cm, grow=-75]{node[egy] (B) {} }}}}}
    child[red, level distance=2cm] {node[ket] {} child{node[ket] {} child{node[ket] {} child[level distance=3cm]{node[ket] {} child[level distance=1.5cm]{node[ket] {} child[level distance=2.5cm]{node[ket] (C) {} }}}}}}
    child[blue, level distance=2cm] {node[egy] {} child[grow=-75]{node[egy] {} child[grow=-90, level distance=1.5cm]{node[egy] {} child{node[egy] {} child[level distance=2.5cm]{node[egy] {} child[grow=-105]{node[egy] (D) {} }}}}}}
    child[green] {node[har] {} child[grow=-70, level distance=3cm]{node[har] {} child[grow=-90, level distance=4.5cm]{node[har] {} child[grow=-110, level distance=4cm]{node[har] (E) {} }}}};

  \node[csucs] (y)  at (0,-16) {$y$} ;
  \foreach \x in {A, B, D}
    \draw[blue] (y) -- (\x);
  \draw[red] (y) -- (C); \draw[green] (y) -- (E);

  \draw[green] (2.4,-17.4) -- (2.4,-18.2) node[har] {} -- (2.4,-19) node[right, black, text width=3.8cm] {\phantom{x} shortest path \textbf{without} edge weights};
  \draw[red] (0,-17.4) -- (0,-18.2) node[ket] {} -- (0,-19) node[right, black, text width=3.8cm] {\phantom{x} shortest path \textbf{with} edge weights};
  \draw[blue] (-2.4,-17.4) -- (-2.4,-18.2) node[egy] {} -- (-2.4,-19) node[right, black] {other paths};
\end{tikzpicture}
\end{center}

Similarly as in \cite{RvdH_FPP} we also investigate the dense regime, where $\lim \wla_n = \infty$, i.e. the average degree tends to infinity. In this case whp any two vertices are connected, so the giant component contains $n(1-o(1))$ vertices (see \cite{BB_IHRGM}). Again comparing with the counterpart without edge weights, the change in the graph structure is even more significant. Without edge weights the graph is ultra small, meaning that graph distances between uniformly chosen vertices are $o(\log n)$. However with the addition of edge weights the following theorem states that even the magnitudes do not coincide. We get the same type of behavior as observed in the sparse setting, which means that on the shortest-weight path many more vertices are visited.

\begin{theorem}[Dense setting]\label{thm::DenseSettingMain}
Under Assumption \ref{ass::sorosszeg_la} and $\lim\limits_{n\to\infty}\wla_n = \infty$, then we have
\begin{equation*}
    \left(\frac{H_n- \frac{\wla_n+1}{\wla_n} \log n}{\sqrt{\log n}}, \wla_n \mathcal P_n - \log n\right)\toindis (Z,\widetilde L),
  \end{equation*}
  where $Z$ is a standard normal variable, $\widetilde L$ is equal in distribution to the sum of independent random variables $Y_1+Y_2-Y_3$, with $Y_i$ i.i.d. standard Gumbel random variables. Further, we can substitute $\frac{\wla_n+1}{\wla_n}$ in the centraling of the hopcount by $1$ if and only if $\wla_n/\sqrt{\log n} \to \infty$.
\end{theorem}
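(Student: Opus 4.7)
The plan is to lift the Erd\H os--R\'enyi argument of Bhamidi--van der Hofstad--Hooghiemstra to the inhomogeneous setting by reusing the multi-type CTBP machinery already developed for Theorem~\ref{thm::GeneralSetting}. By the memoryless property of the $\mathrm{Exp}(1)$ edge weights, the shortest-weight tree (SWT) explorations from the two sources $x$ and $y$ are Markovian, and each can be coupled, as long as its cluster has size $o(\sqrt n)$, with an independent multi-type CTBP whose mean offspring matrix is $A_n+I$. Under Assumption~\ref{ass::sorosszeg_la} every particle gives birth at total rate $\wla_n$ regardless of its type, so that the type averages against the Perron vector $\pi=\mu$ and the dynamics of the cluster \emph{sizes} mimic the single-type case; this is what makes the ER-style argument go through.

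In the dense regime the Malthusian martingale limit $W_n$ of the CTBP satisfies $W_n\toinp 1$, so the limiting Gumbel fluctuations can no longer come from $-\log W_n$ as in the sparse case. Instead I would extract them from a Janson-type analysis of the inter-arrival times. Writing $T_k^{(x)}$ for the weight-distance at which the SWT from $x$ first has $k$ vertices, the $j$-th inter-arrival time is asymptotically $\mathrm{Exp}(\wla_n j)$ (the minimum of $\wla_n j$ independent $\mathrm{Exp}(1)$ frontier clocks), and hence
\[
  \wla_n T_k^{(x)} \;=\; \sum_{j=1}^{k-1}\frac{E_j}{j} + o_{\PP}(1),
\]
with $E_j$ i.i.d.\ standard exponentials. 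The classical identity $\sum_{j=1}^{k-1}E_j/j-\log k\toindis Y$ (standard Gumbel) yields $\wla_n T_k^{(x)}-\log k\toindis Y_1$, and the analogous statement from the $y$-side gives an independent $Y_2$. For the collision, once the two clusters have sizes $k_x,k_y$, bridging edges in $G(n,\kappa)$ are created at rate $\wla_n k_x k_y/n$, and the minimum weight of a bridging edge contributes an additional $-Y_3/\wla_n$ with $Y_3$ an independent Gumbel (as $-\log$ of an $\mathrm{Exp}(1)$). Combining the three contributions gives $\wla_n \mathcal P_n-\log n\toindis Y_1+Y_2-Y_3=\widetilde L$.

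For the hopcount, decompose $H_n=H_n^{(x)}+H_n^{(y)}+1$, where $H_n^{(s)}$ is the tree-depth of the collision vertex from side $s$. In a CTBP with $\mathrm{Exp}(1)$ inter-birth times and $\wla_n+1$ children on average per vertex, the depth of a uniformly chosen particle alive at time $t$ is asymptotically Gaussian with mean $(\wla_n+1)t$ and variance of order $(\wla_n+1)t$; this follows from a spine random-walk representation (inter-generation times along the spine are i.i.d.\ $\mathrm{Exp}(1)$) or from the generational CLT of Nerman--Jagers for supercritical multi-type BPs. Since each side has $t^{(s)}\approx\log n/(2\wla_n)$ at collision, one obtains
\[
  \frac{H_n-\tfrac{\wla_n+1}{\wla_n}\log n}{\sqrt{\log n}}\toindis Z,
\]
with $Z$ standard normal, asymptotically independent of the Gumbel fluctuations of $\mathcal P_n$: the CLT is driven by additive depth increments of scale $\sqrt{\log n}$, whereas the Gumbels live on the $O(1)$ scale. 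Replacing $(\wla_n+1)/\wla_n$ by $1$ in the centering changes it by $\log n/\wla_n$, which is $o(\sqrt{\log n})$ if and only if $\wla_n/\sqrt{\log n}\to\infty$, yielding the sharp condition stated at the end of the theorem.

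The main obstacle I expect is the joint control of the two-sided BP coupling right at the collision threshold, together with the asymptotic independence of the Gumbel and Gaussian contributions. The standard CTBP coupling degrades as the cluster sizes approach $n/\wla_n$, which is precisely where the collision happens, so one must run the coupling up to a size slightly below this threshold and then treat the short terminal phase separately by a Poisson approximation for bridging edges. Verifying the independence of $Z$ and $\widetilde L$ is also delicate because both ultimately depend on the same underlying BP; the idea is that after conditioning on the sequence of cluster sizes (which already encodes $Y_1$ and $Y_2$), the depth CLT is driven by i.i.d.\ $\mathrm{Exp}(1)$ spine increments and the bridging-edge exponential generating $Y_3$ is independent of them, so the three Gumbels and the Gaussian decouple in the limit.
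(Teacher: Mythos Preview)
Your proposal follows essentially the same route as the paper: derive the two Gumbels from the split-time identity $\wla_n\tau_m\approx\sum_{j\le m}E_j/j$ (which is exactly \eqref{eq::splittime_exp} combined with $S_i^{(n)}\approx i\wla_n$), obtain a third Gumbel from the connection analysis, and handle the hopcount via the Kharlamov generational CLT for the multi-type CTBP.

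The one place where your sketch diverges from the paper, and where your heuristic is not quite right, is the origin of $Y_3$. You describe it as ``$-\log$ of an $\mathrm{Exp}(1)$'' coming from a single minimum bridging edge; that mechanism does not by itself produce a Gumbel independent of $Y_1,Y_2$. The paper instead freezes the $x$-cluster after $a_n$ splits and grows the $y$-cluster until collision; the rescaled collision indices $C_n^{(i)}a_n/n$ converge to a $\mathrm{PPP}(\wla)$ (Proposition~\ref{prop::WeakConvOfCn}), and the third Gumbel arises from Lemma~\ref{lemma::gumbeldistribution}, namely $\min_i\{\wla_n^{-1}\log P_i+E_i\}\stackrel{d}{=}-\wla_n^{-1}X+\wla_n^{-1}\log(\wla_n+1)$, applied inside \eqref{eq::densepath}. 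This freeze-and-connect device also dissolves the terminal-phase coupling obstacle you anticipate: the $y$-side only needs to be controlled up to $C_n^{\mathrm{con}}=\Theta(n/a_n)$ splits, and the thinning and multiple-label errors are handled uniformly by Lemmas~\ref{lemma::ThinnedMarksWet} and~\ref{lemma::ThinnedMarksAlive}, so no separate Poisson approximation for a short final phase is required.
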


\subsection{Sketch of proofs}\label{subsec::sketch&structure}

When searching for the shortest-weight path between two vertices, the intuitive picture to keep in mind is to let fluid percolate at a constant rate simultaneously from each vertex. After time $\tau$ the flow from $x$ contains the vertices whose shortest-weight path is at most $\tau$ from $x$. It is standard to relate this exploration process of the neighborhood of a vertex to a branching process (BP). The generation and lifetime of a particle in a BP corresponds to its hopcount and weight of optimal path in the exploration process. Section \ref{sec::BP} introduces a continuous-time multi-type branching process that arises naturally in an exploration process of the giant component of $G(n, \kappa)$. An analog of the main results is proved in Theorem \ref{thm::MainResForBP} in the BP setting.

For the results to carry through we need an embedding of the BP into the IHRG. This is dealt with in Section \ref{sec::embedding}. The vertices of the BP and the vertices of the IHRG need to be identified with one another with high probability. To achieve this labels are assigned to the vertices according to their type in such a way that we can rule out loop and multiple edges in the IHRG. To deal with the problem of cycles in the graph, the notion of thinning is introduced on branching processes.

Intuitively it is clear that when the two flows of fluid collide, then the shortest-weight path between $x$ and $y$ has been found. Thus it is crucial to determine when the connection actually happens. The random time of the collision will be referred to as the connection time $C_{xy}$. In the figure below, the fluids flow continuously and the vertices $v_x$ and $v_y$ denote the last wetted vertices (along the shortest path) by the flows from $x$ and $y$ respectively before time $C_{xy}$.
\begin{center}
\begin{tikzpicture}
  [rotate=90, scale=0.45, xscale=0.75, yscale=1.1, csucs/.style={fill=red!20, circle}, level distance= 22mm, sibling distance=20mm]

  \node[csucs] (x)  at (0,0)   {$x$} child child[missing] child;
  \node[csucs] (y)  at (0,-14) {$y$} [grow'=up] child child child[missing] child;
  \node[csucs] (vx) at (-0.5,-4.4) {$v_x$};
  \node[csucs] (vy) at (0.5,-8.5)  {$v_y$};

  \draw[very thick]
      (x) -- (vx)  (vy) -- (y);
  \draw[very thick] (vx) -- (vy);

  \foreach \col/\x in {black!20/-2, black!50/-4, black/-7}
    \draw[\col,thick] (\x,0) arc (-180:0:-\x);
  \foreach \col/\x in {black!20/-2, black!50/-4, black/-7}
    \draw[\col,thick] (-\x,-14) arc (0:180:-\x);

  \foreach \x/\old/\z/\zz in {-8/below/A/Exp(1),8/above/G/$+1$}
    {\draw (\x,0)  -- (\x,-14);
     \foreach \y in {0,-4.4,-8.5,-14}
       \filldraw (\x-0.3,\y-0.05) rectangle (\x+0.3,\y+0.05);
     \foreach \y/\szoveg in {-2.2/$\z(v_x)$,-6.5/\zz,-11.2/$\z(v_y)$}
       \node[\old] at (\x,\y) {\szoveg}; };
  \node[right, text width=2cm] at (-8,-14.2) {weight of path};
  \node[right, text width=2cm] at (8,-14.2) {number of edges};
  \node[right, text width=3.5cm] at (0,-14.6) {{\large Colliding flows of fluid}};
\end{tikzpicture}
\end{center}
Let $G(v_x)$ denote the number of edges between the source $x$ and vertex $v_x$ and $A(v_x)$ the weight of this path. Then for the hopcount $\CH_n(x,y)$ and for the weight $\CP_n(x,y)$ of the shortest-weight path we get that
\begin{align*}
  \CH_n(x,y) &= G(v_x)+G(v_y)+1, \\
  \CP_n(x,y) &= A(v_x)+A(v_y)+\mathrm{Exp}(1).
\end{align*}
The problem is that we can't let the fluids flow continuously, but only in discrete steps as new vertices are wetted by the flows. Thus, the sum of the path length plus the remaining edge-weight between $v_x$ and $v_y$ must be minimized over all possible choices of last vertices $v_x$ and $v_y$. The rigorous treatment of the connection time is done in Section \ref{sec::ConnectionTime}

After these preparations we begin Section \ref{sec::proof} with the proof of Theorem \ref{thm::DenseSettingMain} for finite-type graphs. To prove the main result in the general setting a sequence of finite partitions of $\CS$ is used. The approximating step function on $\CS\times\CS$ will be given by the average of the kernel $\kappa$ determined by the partition. This ensures that if $\kappa$ is homogeneous then so are all of its approximations.

\subsection{Related work}\label{subsec::relatedwork}

First passage percolation has been investigated on various models, such as the integer lattice, the mean-field model, configuration model, graphs with i.i.d. vertex degrees both without edge weights (see e.g. \cite{RvdH_Bhamidi, RvdH_Hoog_Znamen, Fernholz_Ramachandran, RvdH_Hoog_Mieg, Chung_Lu1, BB_IHRGM, BB_delaVega, Britton_Deijfen_Lof, Norros_Reittu}) and with exponential weights (see e.g. \cite{Amini_Lelarge, Janson_CompleteGraph, RvdH_FPPonCM, Bhamidi, RvdH_FPP, Howard}). The list of results are far from complete, we only attempt to discuss the ones directly related to the present paper.

The IHRG model was extensively investigated by Bollobás, Janson and Riordan in \cite{BB_IHRGM} from many different aspects, including typical distances without edge weights. We already showed what effect the addition of edge weights has on the structure of the graph in Subsection \ref{subsec::mainres}. Many other models are closely related to the IHRG model, for full details see \cite[Sections 4 and 16]{BB_IHRGM}.

The classical supercritical Erdős-Rényi (ER) random graph $G(n,c/n)$ ($c>1$) is the special case when $|\CS|=1$. Our results generalize the FPP results of Bhamidi, van der Hofstad and Hooghiemstra on ER random graphs with i.i.d. exponential edge weights in \cite{RvdH_FPP}. Finite-type graphs were previously studied by Soderberg \cite{Soderberg}.

A restrictive, yet natural class of inhomogeneous random graphs is the \textit{rank-1} class. The kernel $\kappa$ has the special form $\kappa(x,y)=\phi(x)\phi(y)$, where the positive function $\phi$ on $\CS$ can be interpreted as the "activity" of a type-$x$ vertex. In the Chung-Lu model each vertex $i$ is given a positive weight $w_i$ and the edge probabilities $p_{ij}$ are given by $p_{ij}:=w_iw_j/\ell_n$, where $\ell_n=\sum_{i=1}^nw_i$. Norros and Reittu \cite{Norros_Reittu} give results on the existence and size of a giant component with random $w_i$. For deterministic $w_i$, Chung and Lu \cite{Chung_Lu1, Chung_Lu2} show that, under certain conditions, (without edge weights) the typical distance between two vertices is $\log n/\log \bar{d}$, where $\bar{d}=\sum w_i^2/\ell_n$.

A closely related model is the generalized random graph introduced by Britton, Deijfen and Martin-Löf \cite{Britton_Deijfen_Lof} with edge probabilities $p_{ij}=w_iw_j/(n+w_iw_j)$. They show that conditioned on the vertex degrees, the resulting graph is uniformly distributed over all graphs with the given degree sequence. As a result Bhamidi, van der Hofstad and Hooghiemstra in \cite{RvdH_FPP_Generalweights} prove FPP results for the latter two models with \textit{general} continuous edge weights. This is a corollary of their result for the configuration model.


\section{Multi-type branching processes}\label{sec::BP}

In this section we collect the needed properties of the branching process that arises when exploring a component of $G(n,\kappa)$. Let us define  a multi-type continuous time branching process with type space $\CS$, where a particle of type $x\in\CS$, when it splits, gives birth to a set of offsprings distributed as a Poisson process on $\CS$ with intensity measure $\kappa(x,y)\mathrm{d}\mu(y)$. That is, the number of children with types in a subset $S\subset \CS$ has a Poisson distribution with mean $\int_S \kappa(x,y)\mathrm{d}\mu(y)$. Each offspring lives for an Exp(1) amount of time independently of everything else. We denote this branching process with root of type $s$ up to time $t$ by $\Psi_\kappa^s(t)$. For a set $E\in \CS$, $\CD_m^E$ and $\CA_m^E$ stands for the set of dead and alive particles after the $m$-th split whose type belongs to the set $E\in \CS$. For $E = \CS$ we simply write $\CD_m$ and $\CA_m$, respectively. Similarly, $S_m^E$ and $N_m^E$ stand for the number of alive and dead individuals with type in the set $E\in S$, and we simply write $S_m$ for $E = \CS$. Let us also write $\tau_m$ for the time of the $m$-th split.

The branching process $\Psi_\kappa^x$ arises naturally when exploring a component of $G(n,\kappa)$, analogously to that of the exploration of FPP on the Erd\H os-R\'enyi graph. In this exploration process no size-biasing of the degrees happens, due to the independence of edges.

\vspace{1mm}

Analogous to the lines of \cite{BB_IHRGM}, we define an integral operator, whose norm establishes a direct connection with the emergence of a giant component in the random graph $G(n,\kappa)$ and the survival of $\Psi_\kappa$. Let
\begin{equation}\label{def::Tkappa_operator}
(T_{\kappa}f)(x) := \int_{\CS} \kappa(x,y)f(y)\mathrm{d}\mu(y),
\end{equation}
for any measurable function $f$ such that this integral is defined.
The norm of $T_{\kappa}$ is
\[\|T_{\kappa}\| := \sup \left\{ \|T_{\kappa}f\|_2: f\geq0,\; \|f\|_2\leq1 \right\}\leq\infty, \]
where $\|\cdot\|_2$ is the norm of $L^2(\CS,\mu)$.

In the finite-type case each type-$t$ particle gives birth to a Poisson number of type $s$ children with parameter $\la_{st}= \kappa(s,t)\mu(t)$, so in this case $T_{\kappa}f = (A+I)f$ with $A$ defined in \eqref{eq::A_matrix}. Easy calculations yields the norm:
\begin{align*}
\|T_{\kappa}\| = \|A+I\| = \wla+1,
\end{align*}
where the last equality holds because of Assumption \ref{ass::sorosszeg_la}.

Let us recall Theorem 3.1 of \cite{BB_IHRGM}: a giant component emerges in $G(n,\kappa_n)$ with $\kappa_n \to \kappa$ and its corresponding branching process survives with positive probability if and only if $\|T_{\kappa}\|>1$. This is why we assumed throughout that $\wla>0$. More precisely, the survival probability of the branching process is the maximal solution to the functional equation $\rho(x)= 1- \exp\{-T_\kappa\rho(x)\}$. Under assumption \eqref{ass:integral_la+1_general}, the maximal solution is independent of the type $\rho(x)=\rho= 1- \exp\{-(\wla+1) \rho\}$.

Throughout the proofs, we will mainly work with finite-type case $\CS=\{ 1,\dots,r\}$ given with the mean-offspring matrix $A$ in \eqref{eq::A_matrix}. Recall that $\pi$ stands for the normalized left main eigenvector of $A$ (i.e. $\pi$ is a probability measure on $\CS$).  We will need the following limit theorems for $S_m^s$ and $N_m^s$, the number of alive and dead individuals of a given type $s\in\CS$: (these results do not require Assumption \eqref{ass::sorosszeg_la}), and all can be found in Athreya-Ney \cite{Athreya_BP}.

\begin{theorem}[BP-asymptotics]\label{thm::asyptoticsForBP}
For a finite-type continuous time Branching Process as defined above,
\begin{enumerate}[(i)]
\item On the set of non-extinction as $m\to\infty$
    \begin{equation}\label{eq::S_m^t/m}
      \lim_{m\to\infty}\frac{\big(S_m^{1}, \ldots, S_m^{r}\big)}{\wla m}  = \pi \text{ a.s. ,}
    \end{equation}
    \item and similarly
    \begin{equation}\label{eq::N_m^t/m}
       \frac{\big(N_m^{1}, \ldots, N_m^{r}\big)}{m} \toas \pi.
    \end{equation}
\end{enumerate}
\end{theorem}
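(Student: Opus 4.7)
The plan is to work with the discrete embedded chain at split times and reduce both statements to a Perron--Frobenius / Kesten--Stigum type proportion convergence for the supercritical multi-type continuous-time branching process (Athreya--Ney \cite{Athreya_BP}, Chapter V). By memorylessness of the $\text{Exp}(1)$ lifetimes, conditional on the state after the $(m-1)$-st split the type $T_m$ of the next splitter satisfies
\begin{equation*}
\PP(T_m = s \mid \CF_{m-1}) = S_{m-1}^s / S_{m-1},
\end{equation*}
and, conditional on $\{T_m = t\}$, the offspring counts are independent $\text{Poi}(\la_{t1}),\ldots,\text{Poi}(\la_{tr})$ random variables. The central input I would establish is the proportion convergence
\begin{equation*}
S_m^s / S_m \toas \pi_s, \qquad s \in \CS,
\end{equation*}
on the non-extinction set. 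I would extract this from the continuous-time asymptotic $Z^s(t)/|Z(t)| \to \pi_s$ evaluated along the split times $\tau_m$, after first verifying non-explosion ($\tau_m \to \infty$ a.s.\ on non-extinction), which is standard for Markov branching processes with $\text{Exp}(1)$ lifetimes and finite-mean offspring.

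Given the proportion convergence, part (ii) follows from the decomposition
\begin{equation*}
N_m^s = \sum_{k=1}^m \ind\{T_k = s\} = \sum_{k=1}^m \frac{S_{k-1}^s}{S_{k-1}} + U_m^s,
\end{equation*}
where $U_m^s$ is a martingale with respect to $\CF_m$ with increments bounded by $1$. Azuma--Hoeffding plus Borel--Cantelli give $U_m^s = o(m)$ a.s., while Ces\`aro averaging converts the pointwise limit $S_{k-1}^s/S_{k-1} \to \pi_s$ into $m^{-1}\sum_{k=1}^m S_{k-1}^s/S_{k-1} \to \pi_s$, yielding $N_m^s/m \toas \pi_s$ on non-extinction. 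For part (i), I would decompose
\begin{equation*}
S_m^s = S_0^s - N_m^s + \sum_{k=1}^m \xi_k^s,
\end{equation*}
where $\xi_k^s$ is the number of type-$s$ children born at the $k$-th split, distributed as $\text{Poi}(\la_{T_k s})$ conditionally on $T_k$. A parallel martingale-SLLN argument (the increments $\xi_k^s - \la_{T_k s}$ have uniformly bounded conditional variances) gives
\begin{equation*}
m^{-1}\sum_{k=1}^m \xi_k^s \toas \sum_t \pi_t \la_{ts} = (\wla+1)\pi_s,
\end{equation*}
using (ii), and hence $S_m^s/m \toas (\wla+1)\pi_s - \pi_s = \wla \pi_s$, which is (i).

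The main obstacle is the proportion-convergence input $S_m^s/S_m \to \pi_s$ itself. Transferring the continuous-time Kesten--Stigum growth $e^{-\wla t} Z(t) \to W\mathbf{v}$ (with $W$ positive on non-extinction) to the discrete embedded chain requires identifying $\tau_m - (\log m)/\wla$ as an a.s.-convergent quantity depending on $W$; this is exactly the content of Athreya--Ney's treatment of the embedded multi-type branching chain. Once that step is granted, everything else reduces to routine martingale-LLN bookkeeping along the sequence of split times, and one never needs the row-sum Assumption~\ref{ass::sorosszeg_la} (the identity $\sum_t \pi_t \la_{ts} = (\wla+1)\pi_s$ used above is simply the defining property $\pi(A+I) = (\wla+1)\pi$ of the left eigenvector).
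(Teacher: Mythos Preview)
The paper does not actually prove this theorem: it merely records, immediately before the statement, that ``these results do not require Assumption~\eqref{ass::sorosszeg_la}, and all can be found in Athreya--Ney \cite{Athreya_BP}.'' So there is no argument in the paper to compare against; Theorem~\ref{thm::asyptoticsForBP} is quoted from the literature.

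Your sketch is a correct and reasonably efficient way to make that citation concrete. The bookkeeping is sound: the identity $N_m^s=\sum_{k\le m}\ind\{T_k=s\}$ with $\PP(T_k=s\mid\CF_{k-1})=S_{k-1}^s/S_{k-1}$ is exactly the embedded-chain structure, the bounded-increment martingale $U_m^s$ is $o(m)$ by Azuma and Borel--Cantelli, and the Ces\`aro step turns $S_{k-1}^s/S_{k-1}\to\pi_s$ into (ii). Likewise, the decomposition $S_m^s=S_0^s-N_m^s+\sum_{k\le m}\xi_k^s$ with conditionally Poisson $\xi_k^s$ and bounded conditional variances gives (i) via $\sum_t\pi_t\la_{ts}=(\wla+1)\pi_s$, which is just $\pi(A+I)=(\wla+1)\pi$. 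You correctly isolate the one nontrivial input, namely the proportion convergence $S_m^s/S_m\to\pi_s$ on non-extinction, and you are right that this is what the Athreya--Ney citation is really carrying: the continuous-time Kesten--Stigum theorem $e^{-\wla t}Z(t)\to W\mathbf v$ with $W>0$ on non-extinction, read along the split times $\tau_m\to\infty$, gives it immediately. In short, your proposal supplies a genuine proof where the paper only gives a pointer, and the argument goes through without the row-sum assumption, as the paper asserts.
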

As a consequence we also get that
\begin{equation}\label{eq::S_m/m}
\dfrac{1}{m}S_m \to \wla, \quad \mbox{and}\quad  \dfrac{S_m^{t}}{S_m} = \, \stackrel{\eqref{eq::S_m^t/m}}{\longrightarrow} \, \pi_s \ a.s. \\
\end{equation}

Estimates can also be given for the magnitude of the error terms in \eqref{eq::S_m/m}. More precisely, the exponent of the error terms is of order $0.5 \wedge \left(\mathcal{R}e( \la_2) / \wla\right)$, where $\la_2$ is the eigenvalue of $A$ with second largest real part. If this quotient is less then $1/2$, then $(S_m-\wla m)/\sqrt{m}$ and $(S^{t}_m-\wla\pi_t m)/\sqrt{m}$ tends to a multidimensional normal random variable. For details we refer to \cite[Theorems 3.22-24.]{Janson04functionallimit}. We will later make use of these in some of our arguments.

The generation of a particle in the Branching Process corresponds to the hopcount of the vertex in the IHRG. Thus, when we establish the connection between the exploration processes of $x$ and $y$, the hopcount of the vertices at the connection are needed. We will later see that the generation of the particles in the two BP-s will be only independent when conditioned on their type. The following lemma for the multi-type BP will handle these issues:

\begin{theorem}[Generation of a uniformly picked particle in a given type-set]\label{thm::MainResForBP}
\
Let $G^E_m$ denote the generation of a uniformly picked individual from $\CA^E(m)$, $E\subset \CS$. Then, conditioned on survival, the following holds for $m\to\infty$:
\begin{equation}\label{eq::G_mConvForBP}
  \frac{G_m^E- \frac{\wla+1}{\wla} \log m}{\sqrt{\frac{\wla+1}{\wla}\log m}}\toindis Z,
\end{equation}
where $Z$ is a standard normal variable.
\end{theorem}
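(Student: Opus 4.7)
My plan is to prove the CLT for $G_m^E$ via a spine decomposition of the multi-type CTBP, reducing the claim to a CLT for a sum of i.i.d.\ $\mathrm{Exp}(\wla+1)$ lifetimes together with an Anscombe-type random-time-change.

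First I would pass from the discrete split index to the continuous time $\tau_m$ of the $m$-th split. Since $S_m/m\to\wla$ a.s.\ on survival by \eqref{eq::S_m/m} and the next split occurs at rate $S_m$, integrating the splitting intensity (as in Athreya--Ney, Ch.~V) yields, on survival,
\begin{equation*}
 \tau_m - \frac{\log m}{\wla} \toas W
\end{equation*}
for some a.s.-finite random variable $W$. In particular, $\tau_m = \wla^{-1}\log m + O_{\PP}(1)$.

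Next I would perform the spine decomposition. Pick $v^*$ uniformly from $\CA_m^E$ and trace the ancestral lineage $\emptyset = v_0 \to v_1 \to \dots \to v_{G_m^E} = v^*$. Let $L_i$ denote the realized lifetime of $v_i$ for $i < G_m^E$ and let $R$ be the age of $v^*$ at time $\tau_m$, so that $\tau_m = L_0 + L_1 + \dots + L_{G_m^E - 1} + R$. Using the nondegenerate Malthusian martingale limit $S(t)e^{-\wla t}\to W_\infty$, uniform picking from $\CA_m$ converges asymptotically to the size-biased spine measure in which each $v_i$ is chosen proportionally to the long-term descendants of its subtree. For the $\mathrm{Exp}(1)$ lifetime model this exponential tilt produces spine lifetimes $L_i$ that are i.i.d.\ $\mathrm{Exp}(\wla+1)$ with mean $1/(\wla+1)$ and variance $1/(\wla+1)^2$, while the spine types form an ergodic finite-state Markov chain whose stationary distribution coincides with $\pi=\mu$ by \eqref{eq::pi==mu} and Assumption \ref{ass::sorosszeg_la}. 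Restricting the terminal type to $E$ only perturbs the initial transient of the type chain and contributes a tight $O_{\PP}(1)$ correction, so it is enough to establish the CLT for $E=\CS$.

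Finally I would apply the CLT and invert. The CLT combined with an Anscombe-type argument (valid because $G_m^E\toinp\infty$ on survival and is tight around its mean) gives a standard normal $Z$ such that
\begin{equation*}
 \sum_{i=0}^{G_m^E - 1} L_i = \frac{G_m^E}{\wla+1} + \frac{\sqrt{G_m^E}}{\wla+1}\,Z + o_{\PP}(\sqrt{\log m}).
\end{equation*}
Equating this with $\tau_m - R = \wla^{-1}\log m + O_{\PP}(1)$ from Step~1 and solving for $G_m^E$ yields
\begin{equation*}
 G_m^E = \frac{\wla+1}{\wla}\log m - \sqrt{\frac{\wla+1}{\wla}\log m}\,Z + o_{\PP}(\sqrt{\log m}),
\end{equation*}
and since $-Z \stackrel{d}{=} Z$, equation \eqref{eq::G_mConvForBP} follows. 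The main obstacle is the rigorous identification of the spine lifetime distribution and of the spine type chain's limit in the multi-type setting; the single-type case reduces to an elementary Yule-process calculation, but multi-type requires either a Jagers--Nerman-style size-biased tree construction or, as an alternative cross-check, a direct computation of $\E[u^{G_m^E}]$ via the Poisson offspring structure followed by singularity analysis.
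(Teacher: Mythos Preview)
Your spine-decomposition route is genuinely different from the paper's treatment. The paper does not prove Theorem~\ref{thm::MainResForBP} from scratch: it invokes Kharlamov's continuous-time CLT for the generation of a uniformly sampled individual in a prescribed type-set $E$, after checking that the hypotheses there (exponential convergence of the type distribution to $\pi$, and uniformly bounded conditional mean and variance of the one-step generation increment) are satisfied under the $\mathrm{Exp}(1)$ lifetime / $\mathrm{Poi}(\wla+1)$ offspring structure. It then transfers the result from $G_t^E$ to $G_m^E$ by substituting $t=\tau_m$ via Theorem~\ref{thm::tau_m_limit} and using aperiodicity along generations. No spine, no renewal inversion.

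Your argument is more self-contained and makes the constants transparent: under Assumption~\ref{ass::sorosszeg_la} the tilted spine lifetime is indeed type-independent $\mathrm{Exp}(\wla+1)$, and the renewal-CLT inversion of $\tau_m=\sum_{i<G_m^E}L_i+R$ with $\tau_m=\wla^{-1}\log m+O_{\PP}(1)$ delivers exactly the centering $\tfrac{\wla+1}{\wla}\log m$ and variance $\tfrac{\wla+1}{\wla}\log m$. The gap you yourself flag is the real one: \emph{uniform} sampling from $\CA_m^E$ is not the size-biased spine measure for finite $m$, and passing to the limit requires a Jagers--Nerman many-to-one identity together with nondegeneracy of the martingale limit $W$ (Kesten--Stigum) to show that the ancestral lifetimes under uniform sampling are asymptotically i.i.d.\ $\mathrm{Exp}(\wla+1)$. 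Filling this in rigorously is comparable in effort to the Kharlamov result the paper cites; the paper simply outsources that work. Either approach is valid; yours gives more structural insight, the paper's is shorter because it quotes a black box.
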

Thus, the generation of a uniformly picked alive of a given type follows a central limit theorem with parameters independent of the type.
The lemma is a consequence of the result by Kharlamov \cite{Kharlamov1}. He proved the result in continuous time (i.e. for $G_t^E$), for an arbitrary set of types $E\in \CS$ under the conditions that the type-distribution is tending to the stationary $\pi$ exponentially fast in time and that the expectation and variance of the generation of the density of type $y$ individuals after unit time for any type $y\in \CS$ is uniformly bounded in $y$. These conditions clearly hold if the life-time is exponential and the total number of children is Poi($\wla+1$).

To get the result from continuous time to discrete time, one needs to replace $t$ by the $m$-th split time $\tau_m$ and use aperiodicity of the types along generations.

For the asymptotic behavior of split times $\tau_m$ we cite again \cite{Athreya_BP}:
\begin{theorem}\label{thm::tau_m_limit}
\begin{equation}\label{eq::A_mConvForBP}
  \tau_m - \frac{1}{\wla}\log m \toas -\frac{1}{\wla} \log\left(\frac{1}{\wla}  W\right)
\end{equation}
on the set of non-extinction, and $\Pv_x(W>0) = \rho(x)$.
The distribution of $W$ is given by distribution of the limit of the continuous time martingale $\lim_{t\to \infty} e^{-\wla t} |\un S_t|= W$, where $\underline S_t$ is vector of alive particles in $\Psi_\kappa(t)$.
\end{theorem}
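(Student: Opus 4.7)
The plan is to combine the continuous-time martingale convergence defining $W$ with the discrete-time asymptotic $S_m/m \toas \wla$ from Theorem \ref{thm::asyptoticsForBP} and \eqref{eq::S_m/m}, and then solve for $\tau_m$. The key observation is the a.s.\ identity $|\un S_{\tau_m}| = S_m$, which forces the two asymptotic descriptions of the alive population to agree at the random time $\tau_m$.

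First I would justify that $M_t := e^{-\wla t}|\un S_t|$ is a non-negative martingale. Because each type-$s$ particle splits at rate $1$ and produces an independent Poisson number of type-$t$ children with mean $\la_{st}$, the mean vector $\mathbf m(t):=\expect{\un S_t}$ satisfies $\mathbf m(t)^T = \mathbf m(0)^T e^{At}$. Under Assumption \ref{ass::sorosszeg_la} the row sums of $A$ equal $\wla$, so $A\mathbf 1 = \wla \mathbf 1$ and hence $\expect{|\un S_t|} = \mathbf 1^T \mathbf m(t) = e^{\wla t}|\un S_0|$; combining this with the branching property yields the martingale property of $M_t$. Doob's theorem gives an a.s.\ limit $W\geq 0$, and because the offspring law is Poisson (hence $L\log L$) and $A$ is irreducible, the multi-type Kesten--Stigum theorem (see Athreya--Ney) yields $\{W>0\}=\{\text{survival}\}$ a.s., matching $\Pv_x(W>0)=\rho(x)$.

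Next, on non-extinction the process admits infinitely many splits, so $\tau_m \to \infty$ a.s. Evaluating the a.s.\ convergence $e^{-\wla t}|\un S_t|\to W$ along the diverging sequence $\tau_m$ gives
\be
  e^{-\wla \tau_m}\,S_m \;=\; e^{-\wla \tau_m}\,|\un S_{\tau_m}| \;\toas\; W \qquad \text{on survival.}
\ee
Combining this with $S_m/m \toas \wla$ yields $\wla m \cdot e^{-\wla \tau_m}\toas W$, i.e.\ $\wla \tau_m - \log m \toas \log(\wla/W)$. Dividing by $\wla$ and rewriting $(1/\wla)\log(\wla/W) = -(1/\wla)\log((1/\wla)W)$ reproduces \eqref{eq::A_mConvForBP}.

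The only genuinely non-trivial input is the Kesten--Stigum step identifying $\{W>0\}$ with the survival event (as opposed to the weaker inclusion, which is immediate from Fatou and the branching property). Once this is granted, everything reduces to composing two already-established limits at a diverging random time, so I do not anticipate any further obstacle; the remainder is purely algebraic manipulation of the two asymptotics.
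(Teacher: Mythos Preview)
Your argument is correct. The paper does not actually prove this theorem: it is stated as a citation from Athreya--Ney \cite{Athreya_BP}, so there is no ``paper's own proof'' to compare against. That said, your approach is exactly the standard one and is implicitly the same identity the authors exploit later in \eqref{eq::splittime}, where they write $\tau_k = -\tfrac{1}{\wla}\log(M_{\tau_k}/\wla) + \tfrac{1}{\wla}\log(S_k/(\wla k)) + \tfrac{1}{\wla}\log k$ and then send the first two terms to their limits. In other words, you have reconstructed the decomposition the paper takes for granted: evaluate the continuous-time martingale limit at the diverging random times $\tau_m$, combine with the discrete-time law of large numbers $S_m/m\toas\wla$, and solve for $\tau_m$. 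The Kesten--Stigum input you flag is indeed the only substantive external fact needed for the statement $\Pv_x(W>0)=\rho(x)$, and you identify it correctly.
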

Under assumption \eqref{ass::sorosszeg_la}, it is not hard to see that the distribution of $W$ is independent of the initial type $x$, and is the same as for a single-type BP with Poi$(\wla)$ offsprings. Further, $\Ev(W)=1$ and the moment generating function $M_W(t)$ of  $W$ satisfies the functional equation
\be\label{eq::W_distribution} M_W(t)=\int_0^\infty \exp\left\{(\wla+1)\left(M_W(t e^{-\wla y })-1\right)\right\}e^{-y}\mathrm{d} y.\ee

In some calculations, we will need the following lemma for the total number of alive individuals. This lemma is slightly stronger than what follows from the almost sure convergence, and in fact this is the crucial element which is missing for the general $A$ matrix case, i.e. without the Assumption \eqref{ass::sorosszeg_la}.

\begin{lemma}\label{lemma::summable}[Summable error terms for $S_j$ ] Let us suppose Assumption \ref{ass::sorosszeg_la} holds. Then,
Conditioned on the survival, there exists a constant $C> 0$ such that
\be\label{eq::S_j_summable} |S_j - \wla j|\le C (j\log j)^{1/2} \quad \forall j\ge\log\log m \ee holds with high probability.
\end{lemma}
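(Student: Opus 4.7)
The plan is to observe that Assumption~\ref{ass::sorosszeg_la} lifts $S_j$, on the survival event, to an ordinary random walk with i.i.d.\ increments, after which the bound of order $(j\log j)^{1/2}$ falls out of a Chernoff tail estimate combined with a union bound in which the summand $j^{-\alpha}$ is just barely summable above $j=\log\log m$.

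First I would record the i.i.d.\ reduction. At the $j$-th split of $\Psi_\kappa$, whatever the type $T_j$ of the splitting particle is, its offspring count is Poisson with mean $\int_\CS\kappa(T_j,y)\mathrm{d}\mu(y)=\wla+1$ by the row-sum assumption, and this mark is drawn freshly and independently of the past. Consequently the increments $X_j:=S_j-S_{j-1}$ are i.i.d., each distributed as $\mathrm{Poi}(\wla+1)-1$ with mean $\wla$ and variance $\wla+1$. On the survival event $\mathrm{Surv}=\{S_j>0\text{ for all }j\}$ this representation is faithful: $S_j=1+X_1+\cdots+X_j$ and $M_j:=S_j-1-\wla j$ is a centred random walk.

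Next I would apply a standard Chernoff bound. From the MGF identity $\ex[e^{\theta(X_j-\wla)}]=\exp\{(\wla+1)(e^\theta-1-\theta)\}\le\exp\{(\wla+1)\theta^2\}$ for $|\theta|\le 1$, optimizing in $\theta$ yields
\[
\prob{|M_j|>t}\le 2\exp\!\left(-\frac{t^2}{4(\wla+1)\,j}\right),\qquad 0\le t\le 2(\wla+1)j.
\]
Plugging in $t=C\sqrt{j\log j}$ converts this into $2 j^{-\alpha}$ with $\alpha:=C^2/(4(\wla+1))$. Fixing $C>2\sqrt{\wla+1}$ so that $\alpha>1$, a union bound gives
\[
\prob{\exists j\ge\log\log m:\;|M_j|>C\sqrt{j\log j}}\;\le\;2\sum_{j\ge\log\log m}j^{-\alpha}\;=\;O\!\left((\log\log m)^{1-\alpha}\right)\to 0.
\]
This is the reason the threshold is $(j\log j)^{1/2}$ rather than $\sqrt{j}$: the extra $\log j$ promotes the per-$j$ tail from polynomially $j^{-1/2}$ to $j^{-\alpha}$ with $\alpha>1$, which is precisely what the union bound requires.

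Finally I would pass back to $S_j$ and insert the conditioning on survival. Since $\rho:=\mathbf{P}(\mathrm{Surv})>0$ is a positive constant independent of $m$ (see the discussion after~\eqref{eq::W_distribution}), and $S_j=1+X_1+\cdots+X_j$ pathwise on $\mathrm{Surv}$, the conditional probability of the bad event is at most $\rho^{-1}$ times the unconditional estimate above, still $o(1)$; the harmless offset $|S_j-\wla j|\le|M_j|+1$ is absorbed into $C$. I do not foresee a serious obstacle: the only point requiring care is that the random-walk representation holds only on $\mathrm{Surv}$ (the BP is killed at $0$), but that is precisely the event on which the lemma is formulated. Without Assumption~\ref{ass::sorosszeg_la} the total offspring law would depend on the parent's type, $S_j$ would cease to be a plain random walk, and one would have to track the joint type-composition dynamics -- the ``missing ingredient'' alluded to in the statement.
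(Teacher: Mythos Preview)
Your proof is correct and takes a somewhat simpler route than the paper's sketch. Both start from the same observation---under Assumption~\ref{ass::sorosszeg_la} the total offspring is $\mathrm{Poi}(\wla+1)$ regardless of the parent's type, so $S_j$ is a random walk with i.i.d.\ $\mathrm{Poi}(\wla+1)-1$ increments killed at~$0$---and both finish with the same Chernoff-plus-union-bound computation producing a summable tail $j^{-\alpha}$. The difference lies in how the survival conditioning is handled. The paper (following \cite[Proposition~3.7]{RvdH_FPP}) first conditions on the trajectory up to step $\log\log m$, argues that from $S_{\log\log m}\approx\wla\log\log m$ subsequent extinction is exponentially unlikely, and thereby decouples the increments for $j\ge\log\log m$ from the survival event before applying the large-deviation estimate. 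You instead apply the blunt inequality $\condprob{A}{\mathrm{Surv}}\le\prob{A}/\rho$ to the extended random walk $\tilde S_j=1+\sum_{i\le j}X_i$ (continued i.i.d.\ past the killing time), using that $S_j=\tilde S_j$ pathwise on $\mathrm{Surv}$. Your route loses only the harmless constant factor $\rho^{-1}$ and avoids the two-stage argument entirely; the paper's approach, on the other hand, explains the role of the threshold $\log\log m$ and would adapt more readily if one needed finer control of the conditional law itself rather than just a whp statement. One small point worth making explicit in your write-up is the construction of $\tilde S_j$ beyond extinction (you allude to it with ``the BP is killed at~$0$''), since the identity $\{S_j=\tilde S_j\ \forall j\}=\mathrm{Surv}$ is precisely what justifies replacing $S$ by $\tilde S$ inside the conditional probability.
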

The lemma is the direct analog of \cite[Proposition 3.7]{RvdH_FPP}. The heuristics of the proof is the following: The total number of alives at the split times is just a random walk on $\Zb$ with independent Poi$(\wla+1)-1$ increments, but conditioning on survival destroys this independence. However, with a coupling argument we can reduce the error probabilities to the independent case as follows. First condition on the first $\log \log m$ steps of the walk to stay positive. On this event, the random walk is whp close to its expected value, and hitting zero from this point has exponentially small probability. Then, after the $\log\log m$-th step we can forget about the conditioning on survival and work with independent Poi$(\wla+1)-1$ increments. For these, a simple large deviation estimate already yields a summable error term in $j$ for the events in \eqref{eq::S_j_summable}.


\section{Embedding the BP into the IHRG}\label{sec::embedding}

This section relates the exploration process of the neighborhood of a vertex in the inhomogeneous random graph (IHRG) model $G(n,\kappa)$ of Section \ref{sec::intro&results} and the branching process $\Psi_\kappa$ of Section \ref{sec::BP} to one another. To obtain an embedding, we give each BP-particle a label (= the vertex it corresponds) to get a continuous-time labeled BP (CTLBP) on which we define thinning.  This procedure basically deals with the problem of finding the shortest weight path amongst multiple possible paths between any two given vertices.

\subsection{Labeled branching processes}\label{subsec::LBP}
The first step is to determine for a particle in $\Psi_\kappa$ to which vertex of the graph $G(n, \kappa)$ it corresponds. Thus, we describe continuous-time labeled branching processes (CTLBP). By assigning labels to the individuals in $\Psi_\kappa$ according to their type, we will be able to couple the BP to the exploration of $G(n, \kappa)$ from a given initial vertex. Fix $n\geq1$ and denote the set of labels (vertices of the graph) by $[n]=\{1,2,\ldots,n\}$. It is important to distinguish the labels according to the types, so $[n]$ is the disjoint union of the sets of labels $[n]^{(1)}, \ldots, [n]^{(r)}$, where there are $n_t$ different labels in $[n]^{(t)}$. An individual of type $s$ in $\Psi_\kappa$ will be assigned a label from $[n]^{(s)}$.

The construction goes as follows. Assume that the root of the BP is of type $s$. Assign the label $i_0\in [n]^{(s)}$ to it. The root immediately dies and gives birth to a random number $\eta_{st}$ of type $t$ offspring, for each $t \in \CS$.
By \eqref{eq::edge_prob_pijgeneral}, we get that the distribution of the number of type $t$ neighbors of a type $s$ vertex in $G(n,\kappa)$ is
\begin{equation*}
\eta_{st} \stackrel{d}{=} \text{Bin}\left(n_t-\delta_{st}, \dfrac{\kappa(s,t)}{n}\right).
\end{equation*} For $t\neq s$, the $\eta_{st}$ "new" individuals of type $t$ are assigned different labels from $[n]^{(t)}$ drawn without replacement uniformly at random. For $t=s$, we choose the labels the same way from $[n]^{(s)}\backslash i_0$.
 Further, by denoting the number of children of the $j$-th dying particle in the BP by $D_j^{Bin}$, we see that conditioned on the event that the $j$-th split is of a type $s$ vertex, the distribution of $D_j^{Bin}$ is a sum of independent random variables
\begin{equation}\label{eq::offspring_dist}
D_j^{Bin,(s)} = \eta_{s1}+ \eta_{s2}+\ldots+\eta_{sr}.
\end{equation}
Since the edge weights are exponential, each offspring lives for an exponentially distributed random time with rate one, then dies and gives birth to its children, thus the natural embedding requires a continuous time multi-type BP with offspring distribution given in \eqref{eq::offspring_dist}. We denote this BP up to time $t$ by $\Psi_\kappa^{Bin}(t)$.

For the number of alive vertices after the $m$-th split we write $S^{Bin}_m=D^{Bin}_1+\ldots+D^{Bin}_m-(m-1)$. Due to the memoryless property of the exponential distribution, the next vertex ($m+1$-st) to split is uniformly distributed among the $S^{Bin}_m$ alive vertices.

Inductively, the individual that splits at the $j$-th split time can be uniformly chosen from the $S^{Bin}_j$ alive vertices. Assume it is of type $\tilde s$ with label $i_j$. It gives birth to $\eta_{\tilde st}$ offspring of type $t$ and choose $\eta_{\tilde st}$ different labels from $[n]^{(t)}$ for $t\neq \tilde s$ and $\eta_{\tilde s\tilde s}$ different labels from $[n]^{(\tilde s)}\backslash i_j$ for $t= \tilde s$.

\begin{lemma}[Coupling error to Poi offsprings]\label{lemma::BINPOIcoupling}
 The multi-type branching processes $\Psi_\kappa^{Bin}$ and $\Psi_\kappa$ (described at the beginning of Section \ref{sec::BP} ) can be coupled until the $m$-th split with error
 \[ \Pv\left[\exists j \le m, \ D_j^{Bin} \neq D_j^{Poi} \right] \le \frac{m}{n} (\wla+1)\max \kappa (1+ o(1)).\]
As a consequence we immediately get that the time of the $m$-th split and the generation of a uniformly picked alive particle in a given type-set $E\in S$ can be coupled in the two processes with the same upper bound for the error probability.
\end{lemma}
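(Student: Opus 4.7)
The plan is to couple $\Psi_\kappa^{Bin}$ and $\Psi_\kappa$ split by split and then union bound over the first $m$ splits. Proceed by induction on the split index: suppose the two processes have produced identical type-indexed offspring vectors at every split $j'<j$. Then their family trees coincide up to step $j$, the $\mathrm{Exp}(1)$ life-clocks of the alive particles can be taken identical, and the same particle (of type $s_j$, say) is the $j$-th to split in both processes. The only possible discrepancy at the $j$-th split is the offspring vector: in $\Psi_\kappa^{Bin}$ it is $(\eta_{s_j,t})_{t\in\CS}$ with independent coordinates $\eta_{s_j,t}\sim\mathrm{Bin}(n_t-\delta_{s_j,t},\kappa(s_j,t)/n)$, whereas in $\Psi_\kappa$ it is $(\xi_{s_j,t})_{t\in\CS}$ with independent coordinates $\xi_{s_j,t}\sim\mathrm{Poi}(\lambda_{s_j,t})$. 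Use the maximal coupling of each pair $(\eta_{s_j,t},\xi_{s_j,t})$; since the coordinates are independent, the total variation of the full offspring vectors is dominated by the sum of the componentwise TV distances.

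The per-component bound is standard. By the triangle inequality, the classical Stein--Chen/Le Cam bound $d_{TV}(\mathrm{Bin}(N,p),\mathrm{Poi}(Np))\le Np^2$, and $d_{TV}(\mathrm{Poi}(a),\mathrm{Poi}(b))\le|a-b|$, with $N_t:=n_t-\delta_{s_j,t}$,
\begin{equation*}
d_{TV}\bigl(\mathrm{Bin}(N_t,\kappa(s_j,t)/n),\mathrm{Poi}(\lambda_{s_j,t})\bigr)
\le \frac{\kappa(s_j,t)\lambda_{s_j,t}}{n}
+\kappa(s_j,t)\Bigl(\Bigl|\frac{n_t}{n}-\mu_t\Bigr|+\frac{1}{n}\Bigr).
\end{equation*}
Summing over $t\in\CS$ and invoking $\sum_t\lambda_{s_j,t}=\wla+1$ from Assumption \ref{ass::sorosszeg_la} together with $\max_t|n_t/n-\mu_t|=o(1)$ whp by \eqref{eq::n_t/n_to_mu}, each per-split coupling error is bounded by $\frac{1}{n}(\wla+1)\max\kappa\,(1+o(1))$.

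A union bound over $j=1,\dots,m$ now yields
\[
\Pv\bigl[\exists j\le m:\,D_j^{Bin}\neq D_j^{Poi}\bigr]
\le\sum_{j=1}^m d_{TV}^{(j)}
\le\frac{m(\wla+1)\max\kappa}{n}(1+o(1)),
\]
which is the asserted bound. On the complement of this bad event the two family trees, together with their i.i.d.\ $\mathrm{Exp}(1)$ edge weights, are identical, so the split times $\tau_m$ and the generation $G_m^E$ of any alive particle (in particular of a uniformly chosen alive particle from a type-set $E$) coincide in the two processes. Hence the same bound controls the coupling error for these derived quantities.

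The subtle step is controlling the second contribution $\sum_t\kappa(s_j,t)|n_t/n-\mu_t|$: it must be genuinely of smaller order than $(\wla+1)\max\kappa/n$ so that it can be absorbed into the $(1+o(1))$ factor after multiplying by $m$. When $\mathbf{x}_n$ is a well-behaved deterministic sample one has $|n_t/n-\mu_t|=O(1/n)$ and the estimate is immediate; in the general random vertex-space setting one uses a concentration bound such as $|n_t/n-\mu_t|=O_{\Pv}(n^{-1/2}\sqrt{\log n})$ to see that the correction is absorbed whenever the leading term is not already $o(1)$ on its own, and trivially otherwise.
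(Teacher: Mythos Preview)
Your proof is correct and follows essentially the same route as the paper: couple each $\eta_{s_j,t}$ to $\xi_{s_j,t}$ via the standard Le~Cam bound, sum over types using $\sum_t\lambda_{s_j,t}=\wla+1$, and union-bound over the first $m$ splits; the consequence for $\tau_m$ and $G_m^E$ is identical. The only difference is that you go through the intermediate $\mathrm{Poi}(N_t\kappa(s_j,t)/n)$ and then separately control the shift to $\mathrm{Poi}(\lambda_{s_j,t})$, whereas the paper writes the per-component error directly as $n_t\kappa(s,t)^2/n^2=\lambda_{st}\kappa(s,t)(1+o(1))/n$ and absorbs the discrepancy between $n_t/n$ and $\mu_t$ into the $(1+o(1))$ without further comment. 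Your final paragraph makes explicit the point the paper glosses over; the paper effectively relies on the same rate assumption you spell out.
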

\begin{proof}
By the usual coupling of Binomial and Poisson distribution, we can couple $\eta_{st}$ in \eqref{eq::offspring_dist} to $\xi_{st}\sim$Poi$(\la_{st})$ distribution with a coupling error of $n_t (\kappa(s,t)^2/n^2)= \la_{s_t} \kappa(s,t)(1+ o(1)) / n$. Under the assumption that $\max \kappa< \infty$, the coupling to Poisson offspring distribution of a single split has an error at most $\max \kappa (\wla+1) /n$. Summing up the error terms up to $m$ yields the statement.
Clearly the error probabilities $\Pv[G_m^{Bin,E}\neq G_m^{Poi,E}] $ and $\Pv[\tau_m^{Bin}\neq\tau_m^{Poi}]$ can be bounded from above by the error probability that the coupling fails between the two BP-s up to the $m$-th step.
\end{proof}

As a consequence of the lemma, we apply the above described labeling procedure to $\Psi_\kappa$, to embed it in $G(n,\kappa)$, and the error term stays small until we do $o(n/(\wla+1))$ steps. However, there is another error arising from the embedding: By assigning the labels the above described way, we rule out the possibility of loop edges and multiple edges between vertices during an exploration of $G(n, \kappa)$. The error of the coupling may arise from cycles: we have to find  the shortest weight path amongst multiple possible paths between any two given vertices. We handle this problem by thinning $\Psi_\kappa$.

\begin{center}
\begin{tikzpicture}
[scale=0.4,
 fiu/.style={circle, shade, ball color=blue!30, scale=0.9},
 lany/.style={circle, shade, ball color=red!30, scale=0.9},
 el/.style ={thick, double = black, double distance = 1pt},
 level 1/.style={sibling distance=5cm},
 level 2/.style={sibling distance=2cm}, thick]

\begin{scope}[yscale=0.7]
\node[fiu] {\textbf{1}}
  child[level distance=1.5cm] {node[fiu]  {\textbf{2}}
    child[level distance=7.5cm] {node[lany] {\textbf{6}} } child[level distance=3cm] {node[lany] {\textbf{7}} child[level distance=12cm] {node[lany] {\textbf{5}} child[level distance=4.5cm] {node[lany] {\textbf{7}} } } child[level distance=6cm] {node[fiu] {\textbf{4}} } } }
  child[level distance=6cm] {node[lany] {\textbf{5}}
    child[level distance=9cm] {node[lany] {\textbf{7}} } child[level distance=12cm] {node[fiu] {\textbf{3}} } child[level distance=13.5cm] {node[fiu] {\textbf{1}} } }
  child[level distance=3cm] {node[fiu]  {\textbf{3}}
    child[level distance=4.5cm] {node[fiu] {\textbf{4}} child[level distance=4.5cm] {node[lany] {\textbf{6}} } } child[level distance=10.5cm] {node[lany] {\textbf{5}} } };
\node[yshift=-6.5cm] at (0,0) {{\large Marked BP}};
\end{scope}

\begin{scope}[xshift=-15cm,yshift=-5cm]
\foreach \x/\y in {0/1, 50/2, 100/3, 150/4}
  \node[fiu] (\y) at (\x:5cm) {\textbf{\y}};
\foreach \x/\y in {200/5, 250/6, 300/7}
  \node[lany] (\y) at (\x:5cm) {\textbf{\y}};

\draw[el, blue]  (4) -- (3) -- (1) -- (2);
\draw[el, red]   (7) -- (5) -- (6);
\draw[el, purple](1) -- (5) -- (3);
\draw[el, purple](7) -- (4) -- (6) -- (2) -- (7);
\node[yshift=-2.8cm] at (0,0) {{\large$G(n,\kappa)$}};
\node[yshift=-3.2cm, text width=5cm, below] at (0,0) 
{\begin{itemize}
 \item Marks of siblings are different
 \item Parent-offspring marks are different
 \end{itemize}};
\end{scope}
\end{tikzpicture}
\end{center}

\subsection{Thinning the branching process}\label{subsec::thinning}

The notion of thinning is introduced on branching processes to identify the shortest weight path between any two given vertices. The percolating fluid first reaches $y$ along the shortest path, then other paths are found later whenever the label of $y$ reappears in the labeling procedure. Thus, we only have to keep track of the first occurrences of each label.

In terms of the CTLBP this means that when we reach a label
\[ i_k\in\CD(k-1):=\{ i_0, i_1, \ldots, i_{k-1}\},\]
then we found a cycle in the exploration of the IHRG. The longer paths are irrelevant, thus we delete $i_k$ and the whole subtree starting from it in $\Psi_\kappa$. We call the label $i_k$ and its subtree thinned. As a result we only keep the shortest weight paths between pairs of vertices. We refer to the resulting  process as $\Th\Psi_\kappa$.

For a fixed $n$, the total number of labels is finite ($=n$), so a.s. at some random time all labels will have appeared. This means that $\Th\Psi_\kappa$ dies out a.s., and at this time we found the minimal weight spanning tree of a component of $G(n,\kappa)$. It is clear that for each $t\geq 0$, the set of labels reached  in $\Th\Psi_\kappa(t)$ and the set of vertices reached by time $t$ in $G(n,\kappa)$ are equal in distribution. So we arrive at:
\begin{lemma}[FPP on $G(n,\kappa)$ is thinned CTLBP]\label{lemma::CTMBPEmbeddinIntoIHRG}
For any fixed $n\geq1$, consider $\Th\Psi_\kappa$ and $G(n,\kappa)$ as defined above. Then for any $i_0\in [n]$, the weight $\CP_n(i_0,j)$ and the hopcount $\CH_n(i_0,j)$ of the shortest weight path between vertices $i_0, j\in [n]$ in $G(n,\kappa)$ is equal in distribution to the  weight and hopcount of the shortest weight path between the root $i_0$ and $j$ in $\Th\Psi_\kappa$.
\end{lemma}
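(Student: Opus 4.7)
The plan is to construct an explicit coupling between the first-passage exploration from $i_0$ in $G(n,\kappa)$ and the thinned labeled branching process $\Th\Psi_\kappa$ rooted at a particle carrying label $i_0$, and then simply read off the lemma. By the memoryless property of the Exp(1) edge weights, the FPP dynamics admit a Markovian description: at each wetting event the newly wetted vertex $v$ reveals all its graph edges, and each edge from $v$ to a then-unwetted vertex acquires a fresh Exp(1) clock; the next wetting occurs when the minimum of the currently active clocks fires. The sequence of (time, label, generation)-triples produced this way is exactly what we want to identify with the analogous triples in $\Th\Psi_\kappa$.

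First I would match the one-step offspring laws. When a particle of type $s$ dies in the CTLBP it produces, for each $t$, $\eta_{st}\sim\mathrm{Bin}(n_t-\delta_{st},\kappa(s,t)/n)$ type-$t$ children whose labels are drawn uniformly without replacement from $[n]^{(t)}$ (excluding the parent's own label when $t=s$), each with an independent Exp(1) lifetime. This is precisely the conditional law, in $G(n,\kappa)$, of the number, identities and edge-weights of the type-$t$ neighbours of a newly wetted type-$s$ vertex. Since edges in $G(n,\kappa)$ are mutually independent given the types, the children generated at distinct deaths in the CTLBP match the joint law of neighbourhoods of distinct wetted vertices in $G(n,\kappa)$, and the inter-split waiting times of $\Psi_\kappa$ agree with the FPP inter-wetting times by the memoryless property.

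The role of thinning is to translate cycles into BP language: a child carrying a label $i_k\in\CD(k-1)$ corresponds to an edge from the current parent to a vertex that has already been wetted via a strictly shorter path, so deleting that child together with its entire subtree preserves the shortest-weight tree from $i_0$. Consequently, for every $j$ in $i_0$'s component, the first (equivalently, the only non-thinned) appearance of label $j$ in $\Th\Psi_\kappa$ occurs at time $\CP_n(i_0,j)$ and at generation $\CH_n(i_0,j)$, yielding the claimed equality in distribution of the pair $(\CP_n(i_0,j),\CH_n(i_0,j))$.

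The step I expect to be the most delicate is justifying that the CTLBP's rule of drawing children uniformly from the full label set $[n]^{(t)}$ (rather than from the still-unwetted part) is consistent with the i.i.d.\ Bernoulli structure of the edges in $G(n,\kappa)$. The point is that the FPP exploration never exposes an edge between two already-wetted vertices, so at the moment of $v$'s death such an edge is still an untouched $\mathrm{Bernoulli}(\kappa(s,t)/n)$ random variable with an unused Exp(1) weight; the uniform label-draw in the CTLBP faithfully reproduces this fresh randomness, and the thinning then discards exactly the children that correspond to these cycle-closing edges and are irrelevant for the shortest-weight tree rooted at $i_0$.
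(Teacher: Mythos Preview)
Your proposal is correct and takes essentially the same approach as the paper, which treats the lemma as an immediate consequence of the labeling and thinning construction: the text preceding the lemma simply asserts that ``for each $t\ge 0$, the set of labels reached in $\Th\Psi_\kappa(t)$ and the set of vertices reached by time $t$ in $G(n,\kappa)$ are equal in distribution,'' and you have spelled out the coupling the authors leave implicit. One minor remark: you lean on the memoryless property to obtain a step-by-step Markovian coupling, whereas the paper's Remark immediately following the lemma observes that the statement holds for arbitrary continuous edge-weight distributions; your argument adapts to that generality once you note that the extra randomness the CTLBP spends on children whose labels already lie in $\CD$ is discarded by the thinning and hence never interacts with the graph's already-revealed edges.
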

\begin{remark}
We did not use that the edge weights are exponentially distributed. So Lemma \ref{lemma::CTMBPEmbeddinIntoIHRG} holds for i.i.d. edge weights with arbitrary continuous distribution supported on $(0,\infty)$.
\end{remark}

To make the intuitive picture of colliding flows of fluid precise we formally introduce the notion of shortest-weight trees $SWT_k$, for $k\geq1$. Since we cannot follow the progress continuously in time, we keep track of the flows at each split time $\tau_k$. With a slight misuse of notation, let $\CD(k)$ and $\CA(k)$ stand for the collection of dead and alive labels of the vertices that the flow reaches up to and including time $\tau_k$ (as a list, with multiple occurrences). Clearly $|\CA(k)|= S_k$. Let $SWT_0=(\{i_0\}, \tau_0=0)$, and define
\begin{equation}\label{eq::DefOfSWT_k}
SWT_k= \left( \CD(k), \CA(k), \{\tau_0, \tau_1, \ldots, \tau_k\}\right), \: k\geq1.
\end{equation}
The CTLBP $\Psi_\kappa(t)$ can be uniquely reconstructed from the sequence $(SWT_k)_{k=1}^{\infty}$. Note that $SWT_k$ contains all the labels in $\Psi_\kappa$, also the thinned labels and possibly some multiple labels among alive vertices. When we investigate the collision of the two flows from $x$ and from $y$ later, we want to avoid the case that this connection happens at thinned vertices in the BP-s. Thus, we will need an upper bound on the proportion of thinned alive labels of a given type.

\begin{lemma}[Expected number of thinned alive labels]\label{lemma::ThinnedMarksWet}
Fix $k\geq1$ and denote by $\CA^t(k)$ and $\mathrm{th}\CA^t(k)$ the number of alive and thinned alive particles of type $t\in \CS$ after the $k$-th split in the CTLBP. Then under Assumption \ref{ass::sorosszeg_la}
\begin{equation}\label{eq::ExpectedThinnedWet}
\expect{\frac{\mathrm{th}\CA^t(k)}{\CA^t(k)}}\leq \dfrac{\wla+1}{\wla}\dfrac{k}{n}(1+o(1)).
\end{equation}
\end{lemma}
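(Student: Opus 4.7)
The plan is to decompose any thinned alive type-$t$ particle at time $\tau_k$ according to which of its ancestors caused the thinning, bound each ancestor's thinning probability via the randomness of the labeling, and then combine the bounds using branching-process asymptotics. A particle $v$ that is alive of type $t$ at $\tau_k$ is thinned iff one of its ancestors is a splitter $w_j$ whose label $i_j$ landed in $\CD(j-1)$. Writing $M^t_j(k)$ for the number of type-$t$ alive descendants of $w_j$ at time $\tau_k$, the union bound yields
\[
  \mathrm{th}\CA^t(k) \;\le\; \sum_{j=1}^{k} \ind\{i_j \in \CD(j-1)\}\, M^t_j(k).
\]

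Conditionally on the structural history (family tree, types, split times), I would argue the splitter labels are essentially independent and uniform on their type-specific pools $[n]^{(t_j)}$ (the only coupling is mild sibling-exclusion, which costs at most a $(1+o(1))$ factor). A union bound over the at most $N^{t_j}_{j-1}$ earlier splitters of the same type then gives
\[
  \Pr(i_j\in\CD(j-1)\mid\text{structure}) \;\le\; \frac{N^{t_j}_{j-1}}{n_{t_j}}(1+o(1)) \;=\; \frac{j-1}{n}(1+o(1)),
\]
where the last equality uses $N^s_m/m\to\mu_s$ from Theorem~\ref{thm::asyptoticsForBP}(ii) and $n_t/n\to\mu_t$ from \eqref{eq::n_t/n_to_mu}, so that the $\mu_{t_j}$ factor cancels between numerator and denominator, yielding a type-independent bound.

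For the expected size of the descendant set I would use a size-biasing identity: since $w_j$ dies at $\tau_j$ and immediately produces $D_j$ children, each child is alive at $\tau_j$ and exchangeable with any other alive-at-$\tau_j$ particle, so each contributes $\E[S_k/S_j]\approx k/j$ alive descendants at $\tau_k$ by Theorem~\ref{thm::asyptoticsForBP}(i), a proportion $\mu_t$ of which are of type $t$. Hence $\E[M^t_j(k)]\approx(\wla+1)\mu_t k/j$. By the Markov property at $\tau_j$, the label $i_j$ (determined at $w_j$'s birth) and the subsequent subtree are conditionally independent given $t_j$ and $S_{j-1}$, so the expectations factor. Summing over $j$ gives
\[
  \E[\mathrm{th}\CA^t(k)] \;\le\; (1+o(1))\sum_{j=1}^{k}\frac{j-1}{n}\cdot\mu_t(\wla+1)\frac{k}{j} \;\le\; \mu_t(\wla+1)\frac{k^2}{n}(1+o(1)).
\]

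The final step is converting this expectation bound into a bound on the expected ratio. I would use the concentration of $\CA^t(k)=S^t_k$ around $\wla\mu_t k$ that follows from Lemma~\ref{lemma::summable} combined with Theorem~\ref{thm::asyptoticsForBP}(i): on the high-probability event $\{S^t_k\ge(1-\ve)\wla\mu_t k\}$, dividing yields the claimed bound $\tfrac{\wla+1}{\wla}\cdot\tfrac{k}{n}(1+o(1))$, while on the complement the trivial estimate $\mathrm{th}\CA^t(k)/\CA^t(k)\le 1$ is absorbed into the error thanks to the summable tails supplied by Lemma~\ref{lemma::summable}. The main obstacle will be keeping the $(1+o(1))$ factors sharp — specifically, ensuring the cancellation of $\mu_{t_j}$ in the labeling bound so that the type-independent ratio $(j-1)/n$ appears (rather than $(j-1)/(n\mu_{\min})$), and carefully justifying the factorization of the thinning indicator and the subtree size via the Markov property at the split times.
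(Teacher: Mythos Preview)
Your proposal is correct and follows the same overall strategy as the paper: decompose the thinned type-$t$ alives according to which ancestral splitter $w_j$ first received a repeated label, invoke conditional independence of the thinning indicator $\ind\{i_j\in\CD(j-1)\}$ and the descendant subtree, bound the thinning probability by $N^{t_j}_{j-1}/n_{t_j}$, and then use the branching-process asymptotics from Theorem~\ref{thm::asyptoticsForBP} together with $\pi=\mu$ to simplify.

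The one genuine difference in execution is how the ratio is handled. You bound $\E[\mathrm{th}\CA^t(k)]$ first via the size-biasing heuristic $\E[M^t_j(k)]\approx(\wla+1)\mu_t\,k/j$, and only afterwards divide by $\CA^t(k)$ using the concentration of $S^t_k$ around $\wla\mu_t k$ (Lemma~\ref{lemma::summable}). The paper instead keeps the ratio $\mathrm{th}\CA^t(k)/\CA^t(k)$ inside the expectation from the outset and tracks descendants by the type $u$ of the intermediate child: writing $\CA_j^{u\to t}(k)$ for the type-$t$ alives at $\tau_k$ descending from a type-$u$ alive at $\tau_{j+1}$, one has the exact partition identity $\sum_u |\CA_j^{u\to t}(k)|/|\CA^t(k)|=1$, which makes the sum over $j$ collapse cleanly to $\sum_{j=1}^k \tfrac{\wla+1}{\wla n}$ without any appeal to concentration of the denominator. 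Your route is valid but costs an extra ingredient (Lemma~\ref{lemma::summable}) and requires more care in justifying the exchangeability behind the size-biasing step across types; the paper's route sidesteps both issues at the price of a slightly more elaborate type-by-type bookkeeping.
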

\begin{proof}
We calculate the number of alive thinned vertices of type-$t$ by checking whether the particle that splits at time $\tau_j$ is thinned, then see how many type-$t$ alive descendants it has in its subtree at the $k$-th split. Denoting these descendants by $\CA^{i_j\to t}_j(k)$ and the type of $i_j$ by $t(i_j)$ we get that
\begin{equation}\label{eq::thA^t(k)}
\mathrm{th}\CA^t(k)=\sum_{j=1}^k\;\sum_{s\in\CS} |\CA^{i_j\to t}_j(k)|\indd{i_j \text{ is thinned}|t(i_j)=s}\indd{t(i_j)=s}.
\end{equation}
Let us further introduce
\[
\CA_j^{u\to t}(k) = \{ v \in \CA^t(k): v \text{ is a descendant of $w$ with } w\in \CA^u(j+1) \}.
\]
For $|\CA^{i_j\to t}_j(k)|$ we can argue that if the $j$-th particle to split was of type-$s$, it had $\eta_{su}$ type-$u$ children, then by symmetry and by the memoryless property of the lifetimes we have that
\[ \condexpect{\CA^{i_j\to t}_j(k)}{\eta_{su}, \CA^u(j), \CA_j^{u\to t}(k)}= \sum_{u\in\CS} \frac{\eta_{su}}{|\CA^u(j+1)|} |\CA_j^{u\to t}(k)|.
\]
Combining this and \eqref{eq::thA^t(k)} with the fact that the event that $i_j$ is thinned and $\CA^{i_j\to t}(k)$ are conditionally independent yields that the expectation in \eqref{eq::ExpectedThinnedWet} can be bounded from above by
\begin{equation*}
\sum_{j=1}^k\;\sum_{s,u\in\CS} \expect{\frac{\eta_{su}}{|\CA^u(j+1)|} \frac{|\CA_j^{u\to t}(k)|}{|\CA^t(k)|} } \underbrace{\condprob{i_j \text{ is thinned}}{t(i_j)=s}}_{(\ast)}\underbrace{\prob{t(i_j)=s}}_{(\diamond)}.
\end{equation*}

Recall that $N_k^{(t)}$ denotes the number of splits of type-$t$ vertices among the first $k$ splits and there are $n_t$ different marks that correspond to vertices of type-$t$. The conditional probability in $(\ast)$ can be bounded simply by $N_k^s/n_s$, while $(\diamond)$ equals $S_j^t/S_j$. Thus
\begin{align*}
\expect{\frac{\mathrm{th}\CA^t(k)}{\CA^t(k)}} &\leq \sum_{j=1}^k\;\sum_{s,u\in\CS} \expect{\frac{\eta_{su}}{S_j^u} \frac{|\CA_j^{u\to t}(k)|}{|\CA^t(k)|} } \frac{N_j^s}{n_s} \frac{S_j^t}{S_j} \\
&= \sum_{j=1}^k\;\sum_{s,u\in\CS}\frac{\kappa(s,u)\mu_u}{\wla(j+1)\pi_u}\frac{|\CA_j^{u\to t}(k)|}{|\CA^t(k)|} \frac{\pi_s^2}{\mu_s}\frac{j}{n}(1+o(1)),
\end{align*}
where we used \eqref{eq::n_t/n_to_mu}, \eqref{eq::S_m^t/m}, \eqref{eq::N_m^t/m} and \eqref{eq::S_m/m}. Under Assumption~\ref{ass::sorosszeg_la} $\pi=\mu$ (see \eqref{eq::pi==mu}) and using the symmetry of $\kappa$ the above expression simplifies to
\[
\frac{1}{\wla}\sum_{j=1}^k \frac{1}{n}\sum_{u\in\CS}\frac{|\CA_j^{u\to t}(k)|}{|\CA^t(k)|} \underbrace{\sum_{s\in\CS}\kappa(u,s)\mu_s}_{\wla+1}(1+o(1)) =\dfrac{\wla+1}{\wla}\dfrac{k}{n}(1+o(1)).
\]
\end{proof}

We show a similar result to the assertion of Lemma \ref{lemma::ThinnedMarksWet} for the number of multiple labels among $\CA(k)$.  We need to guarantee that the number of different labels in the set $\CA_k$ is approximately the same as the size of the set, $S_k$. More precisely,

\begin{lemma}[Expected number of multiple labels]\label{lemma::ThinnedMarksAlive}
For all $k\geq1$ and $t\in\CS$, the number of different labels of alive vertices after the $k$-th split
\begin{equation}\label{eq::multiple_labels}
|\CA^t(k)| = S_{k}^t \left(1- \frac{\wla\pi_t}{2\mu_t}\frac{k}{ n}\right).
\end{equation}
\end{lemma}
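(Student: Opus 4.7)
The equation should be read as an identity in expectation up to a $(1+o(1))$ factor; namely, the claim is $\Ev|\CA^t(k)| = S_k^t\bigl(1-\frac{\wla\pi_t}{2\mu_t}\frac{k}{n}\bigr)(1+o(1))$. The plan is to condition on the unlabelled CTLBP---the genealogy, types, and split times, which determine $S_k^t$ and the distribution of alive type-$t$ particles across splits---and then compute $\Ev[|\CA^t(k)|\mid\mathrm{tree}]$ via batchwise independence of the label assignment.

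Call a \emph{batch} the collection of type-$t$ children produced at a single split, index them by $j$, and write $A_j$ for the subset of batch~$j$ still alive at time $\tau_k$, so that $\sum_j|A_j|=S_k^t$. At each split the labels are drawn uniformly without replacement from $[n]^{(t)}$ (minus the parent's label when the parent is of type~$t$), and the draws corresponding to different splits are independent. By exchangeability of $[n]^{(t)}$,
\[
\Ev\bigl[|\CA^t(k)|\bigm|\mathrm{tree}\bigr] = n_t\bigl(1-\Pv[i\notin\CA^t(k)\mid\mathrm{tree}]\bigr)
\]
for any fixed $i\in[n]^{(t)}$. A short computation gives $\Pv[i\notin\{\text{labels of }A_j\}\mid\mathrm{tree}] = 1-|A_j|/n_t$ up to an $O(1/n_t)$ parent-label correction; independence across batches then yields $\Pv[i\notin\CA^t(k)\mid\mathrm{tree}] = \prod_j(1-|A_j|/n_t)\,(1+O(k/n_t^2))$. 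Taylor expanding, using $\sum_j|A_j|=S_k^t$ and $\sum_j|A_j|^2=O_\Pv(k)$ (since batch sizes are stochastically bounded by independent $\mathrm{Poi}(\la_{tt})$ variables), one obtains $\Ev\bigl[|\CA^t(k)|\bigm|\mathrm{tree}\bigr] = S_k^t\bigl(1-S_k^t/(2n_t)\bigr)(1+o(1))$.

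Inserting the almost-sure asymptotics $S_k^t=\wla\pi_t k(1+o(1))$ from \eqref{eq::S_m^t/m} and $n_t=\mu_t n(1+o(1))$ from \eqref{eq::n_t/n_to_mu}, then taking expectations over the tree, completes the proof. The main delicate point is bookkeeping: verifying that the three lower-order corrections---the forbidden parental label, the within-batch distinctness enforced by the without-replacement sampling, and higher-order ($\ge 3$-way) label collisions---are all strictly smaller than the leading pair-collision term $S_k^t\cdot k/n$. This is routine in the regime $k=o(n)$ in which the lemma is subsequently invoked; outside of that regime the linearisation of $\prod_j(1-|A_j|/n_t)$ itself fails.
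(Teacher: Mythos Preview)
Your argument is correct and reaches the same conclusion as the paper, but the route is genuinely different. The paper takes a much quicker ``birthday problem'' approach: it dominates the constrained labeling by fully i.i.d.\ sampling with replacement from $[n]^{(t)}$, so that the multiplicity $X_i$ of label $i$ is $\text{Bin}(S_k^t,1/n_t)$; it then computes $\sum_i\Pv[X_i\ge 2]=\tfrac{\wla^2\pi_t^2}{2\mu_t}\tfrac{k^2}{n}+o(1)$ directly and subtracts from $S_k^t$. Your approach instead conditions on the unlabelled tree, exploits independence of the label draws across splits (rather than across individual particles), and extracts the pair-collision term from the second-order Taylor expansion of $\prod_j(1-|A_j|/n_t)$.

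What each buys: the paper's argument is two lines once the domination is accepted, but it is one-sided (the with-replacement model only bounds the collision count from above), and one must separately observe that triple collisions are negligible so that $\sum_i\ind[X_i\ge2]$ matches the actual loss $\sum_i(X_i-1)_+$ to leading order. Your batchwise computation is a genuine asymptotic equality from the start and makes the role of the constraints (parent label, within-batch distinctness) explicit as lower-order corrections; it would also transfer more readily to labeling schemes where the i.i.d.\ domination is less immediate. One small slip: the batch of type-$t$ children at a split of type $s$ is bounded by $\mathrm{Poi}(\la_{st})$, not $\mathrm{Poi}(\la_{tt})$; since $\sup\kappa<\infty$ this is harmless for the $\sum_j|A_j|^2=O_\Pv(k)$ bound.
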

\begin{proof}
When assigning labels to new vertices of type $t$ we prescribed some constraints on the set $[n]^{(t)}$ from which we choose its label. We can dominate this by repeatedly choosing from $[n]^{(t)}$ without any constraints. From \eqref{eq::S_m^t/m} we know that $S^{t}_{k}=\wla\pi_t k(1+o(1))$. So let us sample $\wla\pi_t k(1+o(1))$ marks from $[n]^{(t)}$ with replacement. For $i\in [n]^{(t)}$ let $X_i$ be the number of times $i$ was chosen, thus $X_i\sim\text{Bin}(\wla\pi_t k(1+o(1)),\,1/n_t)$. The probability that $i$ is chosen at least twice is
\[\prob{X_i\geq2} =1- \prob{X_i=0}-\prob{X_i=1} = \dfrac{\wla^2\pi_t^2}{2}\dfrac{k^2}{n_t^2} + O\left(\dfrac{k^3}{n_t^3}\right).\]
Thus for the expected number of type-$t$ multiple labels
\[ \expect{\sum_{i\in [n]^{(t)}} \indd{X_i\geq2} } = \sum_{i\in [n]^{(t)}} \prob{X_i\geq2} = \frac{\wla^2\pi_t^2}{2\mu_t} \frac{k^2}{n}+o(1). \]
The claim immediately follows since the number of different labels equals the number of alive vertices $S_k^t=\wla\pi_t k(1+o(1))$ minus the multiple labels.
\end{proof}

\section{Connection time}\label{sec::ConnectionTime}

In this section we rigorously examine the intuitive picture of colliding flows. For technical reasons we do not let the fluids flow simultaneously from both vertices, rather we let the fluid flow from $x$ until it reaches some $a_n=o(n)$ vertices, then we "freeze" it, and  start a flow from $y$ until the random time of connection, i.e. when the two flows collide.

The exploration process from $x$ until the split time $\tau_{a_n}^x$ is coded in $SWT_{a_n}^{x}$ (see \eqref{eq::DefOfSWT_k}). Afterwards, the flow from $y$ can only connect to the flow from $x$ via an alive vertex in $SWT_{a_n}^{x}$. So when assigning the labels to the vertices in the BP from $y$ we must leave out the labels $\CD^x(a_n)$ from the possible labels $[n]$. A possible collision edge appears when a label from $\CA^x(a_n)$ appears among the labels in $\CD^y(k)$. The first possible collision edge appears at split $C_n^{(1)}=\min \{ k\geq0: \; \CA^x(a_n) \cap \CD^y(k) \neq \emptyset\}$ at time $\tau_{C_n^{(1)}}^y$. The $i$-th appears at split
\begin{equation}\label{eq::PossibleCollisionTime}
C_n^{(i)}= \min \{ k\geq C_n^{(i-1)}: \; |\CA^x(a_n) \cap \CD^y(k)|=i\}, \text{ at time } \tau_{C_n^{(i)}}^y.
\end{equation}
Thus the weight of a path between $x$ and $y$ is $\tau_{a_n}^x+\tau_{C_n^{(i)}}^y+E_i$, where $E_i$ is the remaining lifetime of the possible collision edge after time $\tau_{a_n}^x$. From the memoryless property of the weights it follows that $E_i\stackrel{d}{=}\mathrm{Exp}(1)$. So the actual connection happens through the possible collision edge that minimizes the expression $\tau_{C_n^{(i)}}^y+E_{i}$. Thus the shortest weight path equals
\begin{equation}\label{eq::ConnectionTime}
\CP_n = \tau_{a_n}^x + \min_{i} \left\{ \tau_{C_n^{(i)}}^y + E_i\right\}.
\end{equation}
Let us denote the split which minimizes the above expression by $C_n^{\text{con}}$. The figure below illustrates the connection time.

\begin{center}
\begin{tikzpicture}
[scale=0.8,
 elo/.style={circle, shade, ball color=white, scale=1.3},
 halott/.style={circle, shade, ball color=black!40, scale=1.3},
 csucs/.style={fill=red!30, circle},
 el/.style ={thick, double = black, double distance = 0.5pt}]

\fill[blue!20] (0,-3.5) arc (-90:90:4.5cm and 3.5cm) node[left,black] {$\tau_{a_n}$};
\filldraw[blue!20] (10,3.3) node[right,black] {$\tau_{C_n^{\text{con}}}$} arc (90:270:4.3cm and 3.3cm);
\filldraw[blue!40] (10,2.8) node[right,black] {$\tau_{C_n^{(1)}}$} arc (90:270:3.5cm and 2.8cm);

\path[el,draw=red] (0,0) node[csucs] (x) {$x$} -- (3.2,0.6) node[halott] {} -- (5.8,0.3) node[elo] {} -- (10,0) node[csucs] (y) {$y$};

\foreach \x/\y/\q/\w/\n in {1.4/2.4/3.5/2.9/1,3.8/1.9/4.6/1.6/2,2/-1.8/5/-2.1/3}
  \draw (\x,\y) node[halott] (\n) {} -- (\q,\w) node[elo] {};

\draw (2) -- (5.4,2.4) node[elo] {};
\draw[el, purple] (3.5,-1) node[halott] {} -- (6.9,-1.2) node[elo] {};
\draw (3) -- (4.2,-2.6) node[elo] {};

\node[below] at (5,-3.4) {{\Large Connection time}};
\node[elo] at (11,2) {};
\node[right, text width=1.8cm] at (11.3,2) {vertices in $\CA^x(a_n)$};
\node[halott] at (11,0.75) {};
\node[right, text width=1.8cm] at (11.3,0.75) {vertices in $\CD^x(a_n)$};
\draw[el, red] (10.8,-0.75) -- (11.2,-0.75) node[right, black, text  width=1.8cm] {optimal path};
\draw[el, purple] (10.8,-2) -- (11.2,-2) node[right, black, text  width=1.9cm] {first collision edge};

\draw (0,4) -- (10,4);
\foreach \x in {0,4.5,5.7,10}
  \filldraw (\x-0.05,3.75) rectangle (\x+0.05,4.25);
\foreach \szoveg/\z in {$SWT^{(x)}$/2.25,$SWT^{(y)}$/7.85}
  \node[above] at (\z,4) {\szoveg};

\end{tikzpicture}
\end{center}

To be able to determine the distribution of the minimum in \eqref{eq::ConnectionTime}, we need a handle on the size of $C_n^{(i)}$. The following proposition, roughly speaking, states that all the possible collision edges appear at $O(n/a_n)$ time with some random constant.
\begin{proposition}[PPP limit of collision edges]\label{prop::WeakConvOfCn}
Denote a homogeneous Poisson Point Process with intensity $\lambda$ by $\mathrm{PPP}(\lambda)$ and let $\hla=\wla \sum_{s\in\CS} \pi_s^2/\mu_s$. Conditioned on the event that both CTLBPs survive, the point process
\begin{equation*}
\left\{ \frac{C_n^{(i)}a_n}{n} \right\}_i \toindis \mathrm{PPP}(\hla) \;\; \text{as } n\to \infty,
\end{equation*}
where $\hla$ simplifies to rate $\wla$ under Assumption \ref{ass::sorosszeg_la}.
\end{proposition}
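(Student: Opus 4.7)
The plan is to identify the per-split collision probability in the $y$-exploration, show it is asymptotically $\hla a_n/n$, and then upgrade this to a Poisson point process limit by a factorial moment computation on the rescaled time scale $n/a_n$.

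First I compute the conditional probability that split $k$ of the $y$-BP produces a collision edge, given the history. By the memoryless property of the Exp$(1)$ weights, the vertex dying at time $\tau^y_k$ is uniformly distributed among $\CA^y(k-1)$, and a collision occurs precisely when its label lies in $\CA^x(a_n)$. Conditionally on the type of this vertex being $t$ -- which has probability $\pi_t(1+o(1))$ by Theorem \ref{thm::asyptoticsForBP} -- the label is a uniform draw from the available type-$t$ labels, i.e.\ from $[n]^{(t)}\setminus \CD^x(a_n)^t$ minus the labels already used in $y$-BP (up to multiple-label corrections handled by Lemma \ref{lemma::ThinnedMarksAlive}). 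Using that $|\CA^x(a_n)^t|=S_{a_n}^{x,t}(1+o(1))=\wla\pi_t a_n(1+o(1))$ and that the denominator is $n_t(1+o(1))=n\mu_t(1+o(1))$ whenever $k=o(n)$, summing over $t$ yields the per-split collision probability
\[ p_n \;=\; \frac{a_n}{n}\,\wla\sum_{t\in\CS}\frac{\pi_t^2}{\mu_t}\,(1+o(1)) \;=\; \frac{\hla a_n}{n}(1+o(1)). \]
Under Assumption \ref{ass::sorosszeg_la} the identity $\pi=\mu$ from \eqref{eq::pi==mu} collapses the sum to $\sum_t\mu_t=1$, giving $\hla=\wla$.

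Next I would promote this one-step estimate to a PPP limit. For disjoint bounded intervals $I_1,\dots,I_p\subset[0,\infty)$ set $N_n(I_j)=\#\{i:\, C_n^{(i)}a_n/n\in I_j\}$. The strategy is to show that the joint factorial moments $\E\bigl[\prod_{j=1}^p N_n(I_j)(N_n(I_j)-1)\cdots(N_n(I_j)-m_j+1)\bigr]$ converge to $\prod_j (\hla|I_j|)^{m_j}$, the corresponding joint factorial moments of independent Poisson$(\hla|I_j|)$ random variables. These moments are sums over $(\sum_j m_j)$-tuples of distinct split indices of the probability of simultaneous collisions at all prescribed splits. Iterating the one-step conditional estimate along such a tuple yields $p_n^{\sum_j m_j}(1+o(1))$ per tuple, while the number of tuples with indices in the rescaled intervals equals $\prod_j(|I_j|n/a_n)^{m_j}(1+o(1))$; the two factors multiply out to the claimed limit. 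Convergence of joint factorial moments to those of an independent Poisson family is the standard criterion for weak convergence to a PPP.

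The main obstacle is to verify that the conditional collision probability is indeed $p_n(1+o(1))$ uniformly along any tuple of splits in a bounded rescaled window $k\leq Tn/a_n$. Three uniform controls are needed: (i) past collisions deplete $\CA^x(a_n)$ by at most $O(1)$ in a bounded window, negligible against the $\Theta(a_n)$ size of $\CA^x(a_n)^t$; (ii) the number of $y$-labels used up to split $k$ is at most $(\wla+1)k=o(n)$, so the denominator of the uniform-draw probability remains $n_t(1+o(1))$; (iii) the type fractions $|\CA^y(k)^t|/|\CA^y(k)|$ converge to $\pi_t$ uniformly for $k\geq \log\log n$ by Lemma \ref{lemma::summable}, while the transient range $k<\log\log n$ contributes nothing in the rescaled time since $\log\log n\cdot a_n/n\to 0$. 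Combined with the multiple-label bound of Lemma \ref{lemma::ThinnedMarksAlive}, these estimates show that each conditional factor in the factorial moment sum equals $p_n(1+o(1))$, completing the PPP convergence.
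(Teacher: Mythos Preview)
Your argument is correct in spirit and reaches the same per-split collision probability
\[
p_n=\frac{a_n}{n}\,\wla\sum_{t\in\CS}\frac{\pi_t^2}{\mu_t}(1+o(1))=\frac{\hla a_n}{n}(1+o(1)),
\]
but the way you pass from this to the full PPP limit is genuinely different from the paper's route. The paper does not use factorial moments. Instead it proceeds by \emph{induction on the collision index $i$}: it writes
\[
\Pv\!\left[C_n^{(1)}>\frac{xn}{a_n}\right]=\prod_{j\le xn/a_n}\left(1-\sum_{t}\frac{S_{j-1}^{y,t}}{S_{j-1}^{y}}\,\frac{S_{a_n}^{x,t}}{n_t}\right)=\exp\{-\hla_n x+o(1)\},
\]
then conditions on the location of $C_n^{(i)}$ and repeats the same product computation on $(sn/a_n,\,xn/a_n]$ to obtain that $C_n^{(i)}a_n/n$ is $\mathrm{Gamma}(i,\hla_n)$ for each $i$. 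This shows directly that, for each fixed $n$, the rescaled collision splits form a PPP$(\hla_n)$, and the weak convergence then follows from $\hla_n\to\hla$ via \eqref{eq::n_t/n_to_mu} and \eqref{eq::S_m^t/m}.

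What the two approaches buy: the paper's inductive route is shorter because the product structure of the non-collision probability telescopes exactly, so one never needs to track multi-tuple sums or argue joint factorial-moment convergence. Your factorial-moment method is the more robust and textbook approach to PPP limits; it does not rely on the exact product form and would generalize more easily to settings where the per-step probability is not so cleanly multiplicative, at the cost of the uniform controls (i)--(iii) you list. Those controls are all correct and available from the quoted lemmas, so your argument goes through.
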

\begin{proof}
We first show by induction that for fix $n$
\begin{align}\label{eq::DistTimeofCollEdges}
\frac{C_n^{(i)}a_n}{n} &\stackrel{d}{=} \mathrm{Gamma}(i, \hla_n), \text{ where} \\
\hla_n &=\frac{n}{a_n} \sum_{s\in\CS} \frac{S_{a_n}^{x,s}}{n_s}\pi_s(1+o(1))+o(1). \nonumber
\end{align}
For $C_n^{(1)}$ we can write
\begin{equation*}\label{eq::P(C_n>m)}
\prob{C_n^{(1)}>x\frac{n}{a_n}} = \prod_{j=1}^{xn/a_n} \expect{\condprob{C_n^{(1)}>j}{C_n^{(1)}>j-1, \CF_{j-1}}},
\end{equation*}
where $\CF_{j-1}$ is the $\sigma$-algebra generated by $SWT^{x}_{a_n}$ and $SWT^{y}_{j-1}$. To calculate $\condprob{C_n>j}{C_n>j-1, \CF_{j-1}}$ we have to sum over the types in $SWT^{y}$ and find the probability that it does not connect to an alive vertex in $SWT^{x}$ of the same type. Below we use the result of Lemma~\ref{lemma::ThinnedMarksAlive}, and we substitute the number of different marks in $\CA^{x,t}(a_n)$ by $S_{a_n}^{x,t}$, and neglect the error factor of order $(1-(\wla a_n)/n)$ along the lines. This error can be included in the $o(1)$ term of the last line of the display below.
\begin{multline*}
\begin{aligned}
\prob{C_n^{(1)}> \frac{x n}{a_n}}
 &= \prod_{j=1}^{x n/a_n} \left[\sum_{t\in\CS} \frac{S_{j-1}^{y,t}}{S_{j-1}^{y}}\left(1-\frac{S_{a_n}^{x,t}}{n_t}\right)\right]
 = \prod_{j=1}^{x n/a_n} \left[1-\sum_{t\in\CS} \frac{S_{j-1}^{y,t}}{S_{j-1}^{y}}\frac{S_{a_n}^{x,t}}{n_t}\right] \\
&= \exp \bigg(- x\frac{n}{a_n}\sum_{t\in\CS}\frac{S_{a_n}^{x,t}}{n_t} \cdot \underbrace{\frac{a_n}{xn} \sum_{j=1}^{xn/a_n}\frac{S_{j-1}^{y,t}}{S_{j-1}^{y}}}_{(\ast)} +o(1)\bigg),
\end{aligned}
\end{multline*}
where $(\ast)$ equals $\pi_t(1+o(1))$ by \eqref{eq::S_m/m}. So $\frac{C_n^{(1)}a_n}{n}\stackrel{d}{=}\mathrm{Exp(\hla_n)}=\mathrm{Gamma}(1,\hla_n)$. From the induction hypothesis
\begin{multline*}
\begin{aligned}
&\prob{C_n^{(i+1)}>\frac{x n}{a_n}} \\
&= \int_0^x \condprob{C_n^{(i+1)}>\frac{x n}{a_n}}
{\frac{C_n^{(i)}a_n}{n}=s+o(1) }\frac{\hla_n^is^{i-1}}{(i-1)!}e^{-\hla_ns}\mathrm{d}s +\prob{C_n^{(i)}>\frac{xn}{a_n}} \\
&= \int_0^x \prod_{j=sn/a_n}^{xn/a_n} \left[1-\sum_{t\in\CS} \frac{S_{j-1}^{y,t}}{S_{j-1}^{y}}\frac{S_{a_n}^{x,t}}{n_t}\right] \frac{\hla_n^is^{i-1}}{(i-1)!}e^{-\hla_ns}\mathrm{d}s +\prob{C_n^{(i)}>\frac{xn}{a_n}} \\
&= \frac{\hla_n^ix^{i}}{i!}e^{-\hla_nx}+\prob{C_n^{(i)}>\frac{xn}{a_n}}.
\end{aligned}
\end{multline*}
Differentiating the cdf of $C_n^{(i+1)}a_n/n$ with respect to $x$ yields the pdf of $\mathrm{Gamma}(i+1,\hla_n)$, which proves \eqref{eq::DistTimeofCollEdges}. Thus for fixed $n$ the point process
\begin{equation*}
\bigg\{ \frac{C_n^{(i)}a_n}{n} \bigg\}_i \text{ is a } \mathrm{PPP}(\hla_n).
\end{equation*}
From \eqref{eq::n_t/n_to_mu} and \eqref{eq::S_m^t/m} it follows that $\hla_n\toas\hla$ as $n\to\infty$. The weak convergence now immediately follows.
\end{proof}

This result shows that when finding the shortest path it makes no difference to let the fluids flow simultaneously or to delay one of them. When they flow simultaneously, the number of vertices explored by both flows is of order $\sqrt n$ which is optimal in the sense that in every other case the explored vertices are of larger magnitude.

We will see in Subsection \ref{subsec::prooffinite} that to determine the distribution of $\tau_{C_n^{\text{con}}}+E_{\text{con}}$ we need the following.

\begin{lemma}\label{lemma::gumbeldistribution}
Let $(P_i)_i$ denote the points of a $PPP(1)$ process and independent of that, $E_i\stackrel{d}{=}\mathrm{Exp}(1)$, also independent. Then
\begin{equation*}
\min_i \left\{\frac{1}{\wla}\log P_i +E_i\right\} \stackrel{d}{=} -\frac{1}{\wla}X+\frac{1}{\wla}\log (\wla+1),
\end{equation*}
where $X$ follows a standard Gumbel distribution, i.e. $\prob{X\leq x}=e^{e^{-x}}$.
\end{lemma}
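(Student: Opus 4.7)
The plan is to represent the pairs $\{(P_i, E_i)\}_i$ as the atoms of a Poisson point process $\Pi$ on $(0,\infty)\times(0,\infty)$ with intensity measure $\mathrm{d}p\otimes e^{-e}\mathrm{d}e$; this is precisely the marking theorem applied to the PPP$(1)$ of the $P_i$'s together with the i.i.d.\ Exp$(1)$ marks $E_i$. Writing $M := \min_i\{(1/\wla)\log P_i + E_i\}$, the event $\{M>t\}$ is equivalent to $\Pi$ placing no atom in the region
\begin{equation*}
R_t := \bigl\{(p,e) \in (0,\infty)^2 : (1/\wla)\log p + e \leq t\bigr\},
\end{equation*}
so $\Pv(M>t) = \exp(-\mu(t))$, where $\mu(t)$ is the $\Pi$-measure of $R_t$.

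The second step is to evaluate $\mu(t)$ by a direct integration. Since $R_t$ requires simultaneously $e>0$ and $e \leq t - (1/\wla)\log p$, only $p \leq e^{\wla t}$ contributes, and Fubini gives
\begin{equation*}
\mu(t) = \int_0^{e^{\wla t}}\!\!\int_0^{t-(1/\wla)\log p} e^{-e}\,\mathrm{d}e\,\mathrm{d}p
      = \int_0^{e^{\wla t}} \bigl(1 - e^{-t}p^{1/\wla}\bigr)\,\mathrm{d}p
      = e^{\wla t} - \frac{\wla}{\wla+1}e^{\wla t} = \frac{e^{\wla t}}{\wla+1}.
\end{equation*}
Consequently $\Pv(M \leq t) = 1 - \exp\!\bigl(-e^{\wla t}/(\wla+1)\bigr)$.

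Finally, I would match this CDF with that of $Y := -X/\wla + (1/\wla)\log(\wla+1)$ for standard Gumbel $X$ (with $\Pv(X\leq x) = e^{-e^{-x}}$). A one-line substitution yields
\begin{equation*}
\Pv(Y \leq t) = \Pv\bigl(X \geq \log(\wla+1) - \wla t\bigr)
 = 1 - \exp\!\bigl(-e^{\wla t - \log(\wla+1)}\bigr)
 = 1 - \exp\!\bigl(-e^{\wla t}/(\wla+1)\bigr),
\end{equation*}
which coincides with the expression for $\Pv(M \leq t)$ above, giving the claimed distributional identity. I do not anticipate any genuine obstacle here; the only mildly delicate point is the invocation of the marking theorem. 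A completely elementary alternative would be to write $\Pv(M>t) = \ex\prod_i \Pv(E_i > t - (1/\wla)\log P_i \mid P_i)$ and evaluate via the Campbell/Laplace formula $\ex\exp\!\bigl(-\sum_i f(P_i)\bigr) = \exp\!\bigl(-\int_0^\infty (1-e^{-f(p)})\,\mathrm{d}p\bigr)$ with $f(p) = (t - (1/\wla)\log p)_+$, which reproduces the same integral.
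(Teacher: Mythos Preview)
Your argument is correct and arrives at the same tail probability $\Pv(M>t)=\exp\bigl(-e^{\wla t}/(\wla+1)\bigr)$ as the paper, but the route is a little cleaner. The paper conditions on the Poisson points, writes $\Pv(M\ge z)=\Ev\bigl[\prod_{i:P_i<e^{\wla z}} e^{-z}P_i^{1/\wla}\bigr]$, then further conditions on the Poisson number $Z$ of points in $[0,e^{\wla z}]$, uses that those points are i.i.d.\ uniform on $[0,e^{\wla z}]$, and finishes via the probability generating function of $Z$. Your main line bypasses this two-step conditioning by invoking the marking theorem and the void-probability formula for the resulting planar Poisson process, which reduces the whole computation to a single double integral. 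Your ``elementary alternative'' via the Laplace functional is, in spirit, exactly the paper's calculation with the conditioning on $Z$ hidden inside the Campbell formula. Either way, nothing is missing.
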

\begin{proof}
For convenience let $X_i \sim \frac{1}{\wla}\log P_i$. We will calculate the tail distribution of the minimum by conditioning on the Poisson points first:
\[ \ba  \Pv \left[ \min_i X_i + E_i \ge z \right] &= \Ev\left[ \Pv \left[ \forall i  \ X_i + E_i > z | X_1, X_2 \dots \right] \right]\\
&= \Ev \left[ \prod_{i} e^{-(z-X_i)_+} \right] \\
&= \Ev\left[\prod_{i: P_i < e^{\wla z}} e^{-z}  (P_i)^{\frac{1}{\wla}}\right]
 \ea \]

Now, the number of Poisson points $Z$ in the interval $[0,e^{\wla z}]$ follows a Poisson random variable with parameter $e^{\wla z}$.
Conditioning on this number $Z$, the points $P_i, i\le Z$ are independent and uniform in the interval $[0,e^{\wla z}]$. Thus, we can calculate the expected value of the product on the right hand side of the previous display as follows:
\[ \ba  \Ev\left[\prod_{i: P_i < e^{\wla z}} e^{-z} (P_i)^{\frac{1}{\wla}}\right]&= \Ev\left[ \Ev\left[e^{-z} U_i^{\frac{1}{\wla}}\right]^Z\right] \\
&= \Ev\left[\left(\frac{\wla}{\wla+1}\right)^Z\right]= \exp\left\{e^{ \wla z}\left(1-\frac{\wla}{\wla+1}\right)\right\}.\ea \]

It is easy to see that if $X$ is a standard Gumbel random variable with $\Pv(X\le x) = e^{-e^{-x}}$, then $\Pv(-a X + b > x) = e^{-e^{x/a} e^{-b/a}}$. Thus, here with $a = \frac{1}{\wla}$ and $b=\frac{1}{\wla}\log (\wla+1)$ we get the claim.
\end{proof}

We finish the section with a result that shows that the index $i$ where the minimum is taken is stochastically bounded by a Geometric distribution of parameter $1/(\wla+1)$.
\begin{lemma} \label{lem::argminlemma}
The probability that the shortest weight path is not among the first $k$ collision edges is decaying exponentially in $k$, i.e.
\[ \Pv\left[\arg (C_n^{(i)} = C_n^{\text{con}}) > k \right] \le \left(\frac{\wla}{\wla+1}\right)^k.\]
\end{lemma}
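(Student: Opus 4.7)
The plan is to reduce to the Poisson point process (PPP) setting used in Lemma \ref{lemma::gumbeldistribution} and extract an explicit formula for $\Pv[i^* > k]$, where $i^* := \arg(C_n^{(i)} = C_n^{\mathrm{con}})$. By Proposition \ref{prop::WeakConvOfCn} and Theorem \ref{thm::tau_m_limit}, in the limit $n\to\infty$ each summand $\tau_{C_n^{(i)}}^y + E_i$ entering \eqref{eq::ConnectionTime} is distributed as $\tfrac{1}{\wla}\log P_i + E_i + c_n$, where $\{P_i\}$ is a unit-rate PPP on $(0,\infty)$ (the rescaled collision times $\wla\,C_n^{(i)} a_n/n$), the $E_i \sim \mathrm{Exp}(1)$ are iid marks independent of $\{P_i\}$, and $c_n$ is an additive constant irrelevant to the argmin. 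Setting $T_i := \tfrac{1}{\wla}\log P_i + E_i$, the task reduces to bounding $\Pv[\arg\min_i T_i > k]$.

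Following the proof of Lemma \ref{lemma::gumbeldistribution}, I would push the marked PPP $\{(P_i, E_i)\}$ forward under $(p,e)\mapsto (p, \tfrac{1}{\wla}\log p + e)$ to obtain a PPP on $(0,\infty)\times \Rb$ with intensity $\rho(p,t) = p^{1/\wla} e^{-t}\,\mathbf{1}\{t > \tfrac{1}{\wla}\log p\}$. Let $(P^*, T^*)$ be its (a.s.\ unique) point of smallest $T$-coordinate; then $i^* = 1 + \#\{j : P_j < P^*\}$. Conditional on $(P^*, T^*)$ the remaining PPP has intensity $\rho\cdot\mathbf{1}\{t > T^*\}$, so the count is $\mathrm{Poisson}(N^*)$ with parameter
\[
N^* = \int_0^{P^*}\!\int_{T^*}^\infty \rho(p,t)\,dt\,dp = \frac{\wla}{\wla+1}\,e^{-T^*}(P^*)^{(\wla+1)/\wla}.
\]
The Gumbel tail of $T^*$ established in Lemma \ref{lemma::gumbeldistribution} yields $V := e^{\wla T^*}/(\wla+1) \sim \mathrm{Exp}(1)$; the conditional density $\rho(\cdot,T^*)\propto p^{1/\wla}$ on $(0,e^{\wla T^*})$, combined with a Jacobian computation for $W := (P^*/e^{\wla T^*})^{(\wla+1)/\wla}$, gives $W \sim \mathrm{Unif}(0,1)$ independent of $V$. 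Substituting collapses $N^*$ to the form $N^* = \wla V W$.

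With $\Gamma \sim \mathrm{Gamma}(k,1)$ independent of $(V,W)$, use $\Pv[\mathrm{Poisson}(\mu)\ge k] = \Pv[\Gamma\le \mu]$; integrating out $V$ first via $\Ev[e^{-s\Gamma}]=(1+s)^{-k}$ gives
\[
\Pv[i^* > k] = \Ev\!\left[e^{-\Gamma/(\wla W)}\right] = \Ev_W\!\left[\left(\tfrac{\wla W}{\wla W+1}\right)^k\right] \;\le\; \left(\tfrac{\wla}{\wla+1}\right)^k,
\]
where the final inequality uses monotonicity of $w\mapsto \wla w/(\wla w+1)$ on $[0,1]$. The main point that needs justification is that the PPP reduction in the first paragraph is only asymptotic; however, identity \eqref{eq::DistTimeofCollEdges} inside the proof of Proposition \ref{prop::WeakConvOfCn} already shows that $\{C_n^{(i)}a_n/n\}$ is \emph{exactly} a PPP (conditional on $SWT^x_{a_n}$) with rate $\hla_n \toas \hla$, and the error in replacing $\tau_m$ by $\tfrac{1}{\wla}\log m$ from Theorem \ref{thm::tau_m_limit} is absorbed into the additive constant $c_n$ without affecting the argmin, so the bound is preserved at finite $n$ up to negligible corrections.
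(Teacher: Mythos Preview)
Your proof is correct and arrives at the same bound, but by a genuinely different route than the paper.

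Both arguments work in the PPP limit with $T_i=\tfrac{1}{\wla}\log P_i+E_i$. The paper conditions on $P_{k+1}$ and on $z=\min_{j\ge k+1}T_j$: given $P_{k+1}$, the first $k$ points are i.i.d.\ uniform on $[0,P_{k+1}]$, so $\Pv[\min_{i\le k}T_i>z\mid P_{k+1},z]=\prod_{i\le k}\Ev[e^{-z}U_i^{1/\wla}]=(e^{c-z}\tfrac{\wla}{\wla+1})^k$, and $z\ge c$ finishes it. You instead condition on the minimising point $(P^*,T^*)$, recognise $i^*-1$ as a Poisson count with parameter $N^*=\tfrac{\wla}{\wla+1}e^{-T^*}(P^*)^{(\wla+1)/\wla}$, and then work out the joint law of $(P^*,T^*)$ to get the clean representation $N^*=\wla VW$ with $V\sim\mathrm{Exp}(1)$, $W\sim\mathrm{Unif}(0,1)$ independent. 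This yields the exact identity $\Pv[i^*>k]=\Ev_W\bigl[(\tfrac{\wla W}{\wla W+1})^k\bigr]$ and hence the bound by monotonicity.

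The paper's argument is shorter and uses only the elementary fact that the first $k$ points of a PPP are uniform given the $(k{+}1)$-st. Your argument is more structural: it exposes the full distribution of $i^*$ (a shifted mixed Poisson with mixing variable $\wla VW$), not just a tail bound, and makes the connection to the Gumbel computation in Lemma~\ref{lemma::gumbeldistribution} explicit. As to the finite-$n$ caveat you raise at the end: the paper's own proof works in the same PPP limit, so you are not losing anything relative to the original; neither argument is a strict finite-$n$ inequality, and the consequence~\eqref{eq::geo_dominance} is used only as a tightness estimate.
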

As a consequence of the lemma we immediately get that the distribution of the rescaled connection time is stochastically dominated by a sum of independent exponentials with parameter $i$ up to a geometric random variable independent of them, namely
\be\label{eq::geo_dominance} P_n^{\text{con}}:=\frac{a_n C_n^{\text{con}}}{n}  \le \sum_{i=1}^{N} \tilde E_i, \ee
where $N\sim$ Geo$(\tfrac{1}{\wla+1})$ and independently $\tilde E_i$-s are independent Exp$(1)$-s.

\begin{proof}[Proof of Lemma \ref{lem::argminlemma}]
 We use again the notation $X_i \sim \frac{1}{\wla}\log ( P_i)$, where $P_i$ is the $i$-th point in a PPP( $1$) process. To show that the minimum is taken at an index at least $k+1$, we will condition on the value of the minimum (=z) and also on the value of the $X_{k+1}= c$ with $z \ge c$. Thus we have
\[ \ba \Pv\left[\arg\min > k \right] & = \Pv\left[\min_{i\le k} (X_i +E_i) > \min_{j\ge k+1} (X_j + E_j) \right] \\
&=\Ev \left[ \Pv\left[ \forall i \le k, \ E_i > z-X_i \left| X_{k+1}=c, \min_{j\ge k+1} (X_j + E_j) =z \right. \right] \right] \\
&=\Ev \left[\Ev \left[\prod_{i\le k} e^{-(z-X_i)}| X_{k+1} = c, \min = z \right]\right].
\ea
\]

Now $X_{k+1}=c$ means that the $k+1$-th point in the Poisson process  is $P_{k+1}=e^{\wla c}$. Conditioning on this information means that the first $k$ points have the same distribution as $U_i, i=1,\dots, k$ independent uniform points on $[0,e^{\wla c}]$. Thus, the expectation can be calculated as follows:
\[ \Ev \left[\prod_{i\le k} e^{-(z-X_i)}| X_{k+1}=c \right] = \prod_{i\le k} \Ev \left[ e^{-z} U_i^{\frac{1}{\wla}}\right]= \left(e^{c-z}\frac{\wla}{\wla+1}\right)^k. \]
Then clearly we have
\[ \ba\Pv\left[\arg\min > k \right] &= \left(\frac{\wla}{\wla+1}\right)^k \Ev\left[ \Ev\left[e^{k (c-z)} | X_{j+1}=c, \min_{j\ge k+1}=z\right]\right]\\
&\le \left(\frac{\wla}{\wla+1}\right)^k,
\ea \]
where the last inequality comes from the fact that $z-c\ge 0$ almost surely. This is trivial since the sequence of $X_j$-s are increasing, $X_j \ge X_{k+1}$ for all $j \ge k+1$. Thus, $z=\min\limits_{j\ge k+1}{X_j+E_j} \ge X_{k+1}=c$.
\end{proof}

\section{Proof of main results}\label{sec::proof}

We begin the section with the proof of Theorem \ref{thm::GeneralSetting} for finite-type graphs. We continue with the discussion of approximating kernels and then prove Theorem \ref{thm::GeneralSetting} in the general setting. The section is concluded with the proof of Theorem \ref{thm::DenseSettingMain}. The proofs are analogous to the ones in \cite{RvdH_FPP}. The idea of using approximating kernels comes from \cite[Section 7]{BB_IHRGM}.

\subsection{Proof of Theorem \ref{thm::GeneralSetting}: finite-type setting}\label{subsec::prooffinite}

Let $(\CS,\mu)$ be an arbitrary finite-type ground space and $\kappa$ a kernel that satisfies Assumption \ref{ass::sorosszeg_la}. We first argue that the probability that the shortest weight path contains thinned vertices -i.e. it is not a real shortest path -  is $o(1)$. Denote the split of a type-$t$ vertex by $t\dagger$, the event that the connection happens by $\mathrm{con}$ and recall that $\CA^{x,t}(k)$ is the collection of labels of alive type-$t$ vertices after the $k$-th split in the flow of $x$.

From a simple union bound, the probability that the connection happens through a thinned alive vertex $v$ can be bounded from above by
\begin{equation*}
\sum_{t\in\CS} \left(\prob{v\in\mathrm{th}\CA^{x,t}(a_n)}+ \prob{v\in\mathrm{th}\CA^{y,t}(C_n^{\text{con}})}\right)\prob{\mathrm{con}, t\dagger}.
\end{equation*}
Using Lemma \ref{lemma::ThinnedMarksWet} with $k=a_n$ and $k=C_n^{\text{con}}$ respectively yields that this term equals
\begin{align*}
&\sum_{t\in\CS}\frac{S_{C_n^{\text{con}}}^{y,t}}{S_{C_n^{\text{con}}}} \left(\frac{\wla+1}{\wla}\left(\frac{a_n}{n}+\frac{C_n^{\text{con}}}{n-a_n}\right)(1+o(1))\right) \\
&= \frac{\wla+1}{\wla} \left(\frac{a_n}{n}+\frac{n}{n-a_n}\frac{P_n^{\text{con}}}{a_n}\right)(1+o(1)) \to 0\; \text{ as } n\to\infty.
\end{align*}
We divide in the flow of $y$ by $n-a_n$ since the labels are chosen from $[n]\setminus\CD^x(a_n)$. Further note that by Lemma \ref{lemma::ThinnedMarksAlive}, in the proof of Lemma \ref{lemma::ThinnedMarksWet} $|\CA^{x,t}(a_n)|$ can be replaced by $S_{a_n}^{x,t}= \pi_t a_n (1+ o(1))$.

Then, to get the second line we used that $a_n C_n^{\text{con}}/n\sim P_n^{\text{con}}$,  is tight by \eqref{eq::geo_dominance}. This shows that if $a_n / n$ and $1/a_n$ are both tending to zero, - i.e. for all  $a_n=o(n)$, the shortest weight path whp. does not contain a thinned vertex.

\vspace{3mm}

Now we turn to determine the distribution of the shortest weight path. We know from \eqref{eq::ConnectionTime} that
\be\label{eq::minpath} \CP_n = \tau_{a_n}^x + \min_{i} \left\{ \tau_{C_n^{(i)}}^y + E_i\right\}. \ee
In $\Psi_\kappa(t)$, the rescaled total number of alive individuals,  $M_t=e^{\wla t}|\un S(t)|$ is a martingale. Thus, $\tau_k$  can be expressed as
\be \label{eq::splittime} \tau_k= -\frac{1}{\wla}\log \frac{M_{\tau_k}}{\wla} + \frac{1}{\wla}\log \frac{S_k}{\wla k} + \frac{1}{\wla} \log k.\ee
Applying this formula to the minimum in \eqref{eq::minpath} we have
\[ \ba \min_{i} \left\{ \tau_{C_n^{(i)}} +E_i \right \} &= \min_i \Big\{ -\frac{1}{\wla}\log \left( \frac{\hla M^y_{\tau_{C_n^{(i)}}}}{\wla} \right)+ \frac{1}{\wla}\log \left(\frac{S^y_{C_n^{(i)}}}{\wla C_n^{(i)}} \right) \\
& +  \frac{1}{\wla} \log\left(  \hla C_n^{(i)}\right)+ E_i \Big\}. \ea \]

For $i$ fixed, $n\to \infty$, $\hla C_n^{(i)}= = \frac{n}{a_n} \hla P_i \to \infty$ and thus conditioned on survival of the branching process, $\tau_{C_n^{(i)}}\to \infty$ holds as well, implying $M^y_{\tau_{C_n^{(i)}}}\to (W^y| W^y>0):=\hat W^y$ a.s. and in $L_2$, and $S^y_{C_n^{(i)}}/(\wla C_n^{(i)}) \to 1$ also a.s.. Further, we also know from Lemma \ref{prop::WeakConvOfCn} that the law of $(\hla P_i)_i$ converges to a $PPP(1)$ process.
Thus, the minimum becomes asymptotically as $n\to \infty$
\[ \min_{i} \{ \tau_{C_n^{(i)}} +E_i \}  = - \frac{1}{\wla}\log \frac{\hat W^y \hla }{\wla} + \frac{1}{\wla} \log \frac{n}{a_n}  + \min_i \left\{ \frac{1}{\wla} \log (\hla P_i) + E_i  \right\}.
 \]
For the last term we can apply Lemma \ref{lemma::gumbeldistribution} to get
\be \label{eq::mini} \min_i \{ \tau_{C_n^{(i)}}+E_i\} = -\frac{1}{\wla} \log \frac{\hat W^y \hla }{\wla(\wla+1)} + \frac{1}{\wla} \log \frac{n}{a_n} - \frac{1}{\wla} X.\ee
with $X$ denoting a standard Gumbel random variable. We can also use that under Assumption \ref{ass::sorosszeg_la} $\hla = \wla$. Further, applying \eqref{eq::splittime} and \eqref{eq::mini} to the expressions in \eqref{eq::minpath}, we arrive at
\[ \mathcal P_n = \frac{1}{\wla} \log n - \frac{1}{\wla}\log \hat W^{x} \hat W ^{y} - \frac{1}{\wla} X+\frac1\wla\log\left( \wla (\wla+1)\right), \]
with $X$ being a standard Gumbel random variable, $\hat W ^{z}, z=x,y$ denoting the limiting random variable of the (independent) martingales $e^{-\wla t} |S(t)^{z}|$ in $\Psi_\kappa^z$, $z=x,y$ conditioned on non-extinction.  We know that $W^i >0$ a.s. on non-extinction of the processes, so these quantities are well-defined.

\vspace{1mm}
Now we turn to the derivation of the limit theorem for the hopcount $\CH_n$.
We start first proving that the hopcount of the connecting vertices in the processes $\Psi_\kappa^x$ and $\Psi_\kappa^y$ are independent conditioned on their types. We remind the reader that first the flow of $x$ is constructed, which is then frozen at time $a_n$. $(\CA^{x,t}(a_n), \CD^{x,t}(a_n))$ denotes the labels of the alive and dead vertices of type $t$, respectively.
 Now the evolution of the flow of $y$ conditioned on $SWT^{x}(a_n)$ is as follows: leaving out the labels in $\CD^{x}(a_n)$, each time  a vertex splits in $\Psi_\kappa^{y}$, the label of its type-$t$ children are picked uniformly at random without replacement amongst the possible labels of the corresponding type.
\begin{lemma} \label{lem::independence} We have the following statements given the event of $$\{\text{Collision happens from } SWT^{(y)} \text{ to } SWT^{(x)} \text{ at the } C_n\text{-th split at a type } t \text{ vertex } \},$$
\begin{enumerate}
\item the label $v_C$ at which this happens is uniform among all labels in $\mathcal A^{x,\, t}_{a_n}$.
\item Further, the hopcounts $G^{x,t}_{a_n}$, $G_{C_n}^{y,t}$ are independent given that the collision happens at a type $t$ vertex.
\item The distribution of $G^{x,t}_{a_n}$ and $G_{C_n}^{y,t}$ is the same as of a uniformly picked type $t$ alive individual in the processes $\CA^{x}_{a_n}$ and $\CA^{y}_{C_n}$ .
\end{enumerate}
\end{lemma}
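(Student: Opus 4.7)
The plan is to exploit the clean two-stage construction underlying the lemma: first grow $\Psi_\kappa^{x}$ and freeze it at its $a_n$-th split so that $SWT^{x}(a_n)$, together with the set $\CA^{x,t}(a_n)$ of labels, becomes a fixed deterministic object; then grow $\Psi_\kappa^{y}$ sequentially. I would decompose the randomness driving $\Psi_\kappa^{y}$ into (i) a \emph{structural} part -- the branching tree, the types, the split order coming from independent exponential clocks and Poisson offsprings -- which is manifestly independent of $\Psi_\kappa^{x}$, and (ii) a \emph{labeling} part in which each newborn of type $s$ receives a label drawn uniformly without replacement from $[n]^{(s)}\setminus \CD^{x,s}(a_n)$ minus the labels already used in $\Psi_\kappa^{y}$. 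Part (ii) is the only source of coupling between the two flows, and only through the set $\CA^{x,t}(a_n)$. This decomposition is what makes uniformity, the two marginal distributions, and independence all fall out.

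For part~(1), I would argue as follows. By the memoryless property of the exponential lifetimes, the vertex splitting at the $C_n$-th split is uniform among the currently alive vertices in $\Psi_\kappa^{y}$; conditioning on its type being $t$ makes it uniform among alive type-$t$ vertices, and hence its label is a uniform pick from the available type-$t$ label pool. The extra event ``first collision at step $C_n$'' amounts to saying that no label from $\CA^{x,t}(a_n)$ was used at any earlier split and that the $C_n$-th splitter's label does lie in $\CA^{x,t}(a_n)$. This event treats every label in $\CA^{x,t}(a_n)$ symmetrically, so by the standard symmetry argument for uniform sampling without replacement, $v_C$ is uniform on $\CA^{x,t}(a_n)$.

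For part~(3), the generation of $v_C$ in $\Psi_\kappa^{x}$ depends only on the frozen tree structure of $\Psi_\kappa^{x}$ together with the uniform pick from $\CA^{x,t}(a_n)$ provided by~(1), which immediately gives the claimed distribution of $G^{x,t}_{a_n}$. On the $y$-side, $G^{y,t}_{C_n}$ is the generation of the $C_n$-th splitter, a quantity that is determined entirely by the structural randomness of $\Psi_\kappa^{y}$. Conditional on the splitter being of type $t$, the memoryless property again makes it uniform among type-$t$ alives immediately before the split, agreeing with a uniform pick from $\CA^{y,t}(C_n)$ up to the single ``newly-dead'' vertex (a negligible change for any asymptotic application of Theorem~\ref{thm::MainResForBP}). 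Part~(2) then follows by separating the underlying $\sigma$-algebras: $G^{x,t}_{a_n}$ is measurable with respect to $\Psi_\kappa^{x}$ together with the uniform pick from $\CA^{x,t}(a_n)$ coming from the labeling of $\Psi_\kappa^{y}$, while $G^{y,t}_{C_n}$ is measurable with respect to the structural randomness of $\Psi_\kappa^{y}$; by the construction in (i)--(ii) these two $\sigma$-algebras are independent.

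The main obstacle, and the place where care is needed, is verifying that conditioning on ``first collision at step $C_n$ at a type-$t$ vertex'' does not couple the structural and labeling randomness of $\Psi_\kappa^{y}$ in an asymmetric way. The key point is that, given the structural data of $\Psi_\kappa^{y}$ and the type of the $C_n$-th splitter, the conditioning event depends only on labels, and its symmetry among the elements of $\CA^{x,t}(a_n)$ is exactly what produces the uniformity in~(1) without disturbing the structural marginals of $\Psi_\kappa^{y}$. Once this symmetry is pinned down cleanly, (1)--(3) follow from the decomposition essentially without further computation.
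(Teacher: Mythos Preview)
Your argument is correct and rests on the same idea as the paper's proof: exchangeability of the labels in $\CA^{x,t}(a_n)$ under the labeling procedure, combined with the fact that the ``structural'' evolution of $\Psi_\kappa^{y}$ (tree shape, types, and split order) is independent of the label assignment. The paper packages this same observation as a concrete urn scheme---balls in $\CA^{x,t}(a_n)$ versus balls in the rest of the type-$t$ pool, with $d_k$ balls drawn at each birth event and the dying particle being a uniform pick among those already drawn---but the substance is identical to your structural/labeling decomposition. Your final paragraph, noting that the conditioning event ``collision at step $C_n$ at type $t$'' constrains only the label of the splitter and does so symmetrically over $\CA^{x,t}(a_n)$, is exactly the point the urn argument is designed to make explicit; once that symmetry is established, uniformity of $v_C$, the two marginal laws, and conditional independence all follow as you say.
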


\begin{proof}
 The first two statement of the Lemma is a straightforward consequence of the following urn-problem: In an urn there are $M$ balls of type $A$ (alive) and $N$ balls of type $U$ (untouched), each of them labeled. We do the following procedure: in the $k$-th step we draw $d_k$ balls without replacement, add the label of type $A$ and type $U$ balls to sets $L_A$ and $L_U$, respectively, and then put them back top the urn. Thus, $L_A(k)$ consists of all the labels of type-$A$ balls which has been drawn before or at step $k$.
 It is easy to show that at any time, the content of the set $L_A$ and $L_U$ is a uniformly picked set of size $|L_A|$ and $|L_B|$ among all the labels in $A$ and $B$, respectively. In particular, for every label $v \in A$ we have
\[ \Pv[v\notin L_A(k) ] = \prod_{j\le k} \left(1-\frac{d_j}{M+N}\right).  \]
Now let $A=\CA^{x,t}_{a_n}$ and $U=[n_t]\setminus(\CD^{x,t}_{a_n}\cup\CA^{x,t}_{a_n}$. Then, $L_A(k) \cup L_U(k)= \CA^{y,t}(k)$. The previous argument says that at any time, the labels in $L_A= \CA^{y,t}\cap \CA^{x,t}$ are uniformly picked from the labels of $\CA^{x,t}$.
The possible collision edges between the processes $x$ and $y$ are established such that in each step $k$ with some probability $p_k$ (which is the probability that the next dying particle in $\Psi_\kappa^y$ is of type $t$), we pick a uniform label among $L_A(k) \cup L_U(k)$, and check if it is of type $A$. Clearly, whenever this holds true, a possible collision edge is formed in the two shortest weight trees. Also, it is clear from the previous argument that conditioned on the picked label to be in $L_A(k)$, the label of it is a uniformly picked label among $\CA^{x,t}_{a_n}$, (and clearly also uniform in $L_A(k)$). Further, the step $k$ when a label of type $A$ enters $L_A$ is independent of the label itself, thus the generation of the label at the connection in $SWT^{x}$ and in $SWT^{y}$ are independent and equal to the generation of a uniformly picked alive individual of type $t$.
\end{proof}

Now we are ready to determine the limit distribution of the hopcount.
Let $G_{k}^{z,t}$ denote the generation of a uniformly picked alive individual of type $t$ in $\CA^z(k)$, $z= x,y$, and recall the definition of  $C_n^{\text{con}}$.
Then we have
\[ \CH_n = \sum_{t\in \CS} \ind\{C_n^{\text{con}} \cap t\ \dag\} \left(  G^{x,t}_{a_n} +G_{C_n^{\text{con}}}^{y,t} \right).  \]
Thus
\[ \ba \frac{\CH_n - \frac{\wla+1}{\wla} \log n}{\sqrt{\frac{\wla+1}{\wla} \log n}} & = \sum_{t \in S} \ind\{ C_n^{\text{con}} \cap t \, \dag \} \frac{ G^{x,t}_{a_n} - \frac{\wla+1}{\wla} \log a_n}{\sqrt{\frac{\wla+1}{\wla} \log a_n}} \cdot \sqrt{\frac{\log a_n}{\log n}} \\
&+ \sum_{t\in \CS} \ind \{ C_n^{\text{con}} \cap t\dag \} \frac{ G^{y,t}_{C_n^{\text{con}}} - \frac{\wla+1}{\wla} \log C_n^{\text{con}} }{\sqrt{\frac{\wla+1}{\wla} \log C_n^{\text{con}}}} \cdot \sqrt{\frac{\log C_n^{\text{con}}}{\log n}}\\
& + \frac{\frac{\wla+1}{\wla}\log\left(\frac{C_n^{\text{con}} a_n}{n}\right) }{ \sqrt{ \frac{\wla+1}{\wla} \log n}}.
\ea \]
First use that conditioned on $C_n^{\text{con}}$ and the type, the two terms containing $G^{z,t}_*$ converge to independent standard normal variables (independently of the type).
 Further, recall the the distributional bound on $P_n^{\text{con}}=C_n^{\text{con}} a_n/n$ in \eqref{eq::geo_dominance} to see that the last term tends to zero. Lemma \ref{lem::independence} ensures the independence of the two limiting normal variables, thus we get the following distributional limit of the right hand side of the last display:
\[   N\left(0, \frac {\log a_n}{\log n}\right) + N\left(0, \frac{\log \left(\frac {n P_n^{\text{con}}}{a_n}\right)}{\log n}\right) \to N(0,1). \]

Combining this with Lemma \ref{lem::argminlemma} and \eqref{eq::geo_dominance} we get that the last term vanishes as $n\to \infty$ and the variance of the normal distribution is also tending to $1$.
\subsection{Approximation of kernels}\label{subsec::ApproxOfKernels}

For a sequence of partitions of $\CS$ a sequence of regular finitary approximating kernels will be defined, each satisfying \eqref{ass:integral_la+1_general}. In the regular finitary case we may assume that the type space $\CS$ is finite (the regular finitary case and the finite-type case differ only in notation). Furthermore, we can assume that $\mu_s>0$ for every $s\in\CS$, however an argument is needed. We cannot simply ignore such types, since measure zero sets can alter $G(n,\kappa)$ significantly. We can argue as follows, \cite{BB_IHRGM}.

Suppose that $\mu_s=0$ for some $s\in\CS$. Start by redefining the kernel $\kappa'(s,t)=\kappa'(t,s):=\max\kappa$ for every $t\in\CS$ and leaving it alone otherwise. Then define a new probability measure $\mu'$ by shifting some small mass $\eta$ over to $s$ from the other types. Clearly $\mu'_t>0$ for every $t\in\CS$. Possibly the types of some of the vertices changes, so change them correspondingly. This way we obtain a vertex space ${\large \nu}'=(\CS,\mu',(\mathbf{x}'_n)_{n\geq1})$ with kernel $\kappa'$. It is not hard to see that we can couple $G(n,\kappa)$ and $G'(n,\kappa')$ so that $G(n,\kappa) \subseteq G'(n,\kappa')$. 
Finally, letting $\eta\to0$, the norm of $T_{\kappa'}$ with respect to $\mu'$ tends to the norm of $T_{\kappa'}$ with respect to $\mu$, which is equal to the norm of $T_{\kappa}$, since $\kappa=\kappa'$ a.e. Iterating for other measure-0 types, we can see that it suffices to consider cases where $\mu_t>0 \; \forall t\in\CS$.

\vspace{1mm}
We continue with the definitions of the approximating kernels. Given a sequence of finite partitions $\alpha_m=\{A_{m1},\ldots,A_{mM_m}\}$, $m\geq1$, of $\CS$ and an $x\in\CS$, we define $i_m(x)$ as the element of $\alpha_m$ in which $x$ falls, formally $x\in A_{m,i_m(x)}$. As usual, $\mathrm{diam}(A)$ denotes $\sup\{d(x,y): x,y\in A\}$ for $A\subset \CS$, where $d$ is the metric on our metric space $\CS$. Lemma 7.1 of \cite{BB_IHRGM} states that for any ground space $(\CS,\mu)$ there exists a sequence of finite partitions of $\CS$ such that \vspace{-2mm}
\begin{enumerate}[1)]
{\setlength\itemindent{1cm}\item each $A_{mi}$ is a $\mu$-continuity set, \label{proper::mu-continuity} }\vspace{-1mm}
{\setlength\itemindent{1cm}\item for each $m$, $\alpha_{m+1}$ refines $\alpha_m$, \label{proper::refine}}\vspace{-1mm}
{\setlength\itemindent{1cm}\item for a.e. $x\in\CS$, $\mathrm{diam}(A_{m,i_m(x)})\to0$, as $m\to\infty$. \label{proper::diamto0}}
\end{enumerate}

For such a sequence of partitions we can define a sequence of approximations of $\kappa$ by taking its average on each $A_{mi}\times A_{mj}$:
\begin{equation}\label{def::aprrox_kernel}
\bar\kappa_m (x,y) := \dfrac{1}{\mu(A_{m,i_m(x)})\cdot \mu(A_{m,i_m(y)})} \iint\limits_{A_{m,i_m(x)}\times A_{m,i_m(y)}} \!\!\!\!\!\!\!\!\!\!\!\!\kappa(s,t)\mathrm{d}\mu(s)\mathrm{d}\mu(t).
\end{equation}
If $\kappa$ is continuous a.e. then property \ref{proper::diamto0}) implies that $\bar\kappa_m (x,y)\to\kappa(x,y)$ for a.e. every $(x,y)\in\CS^2$. To be able to apply our theorems for finite-type kernels we need to guarantee that Assumption \ref{ass::sorosszeg_la} holds for all $\bar\kappa_m$. Thus, considering $\bar\kappa_m$ as a finite-type kernel with respect to the partition $(A_{m1},\ldots,A_{mM_m})$, the row sums of $\bar\kappa_m$ weighted by $\mu$ must be equal to some constant $c_m$. In fact easy calculations show that independently of the partition sequence or the ground space, only using assumption \eqref{ass:integral_la+1_general} and the fact that $\kappa$ is symmetric, this holds with $c_m\equiv\wla+1$.

\subsection{Proof of Theorem \ref{thm::GeneralSetting}: general setting}\label{subsec::proofgeneral}
The extension of the proof for general $(\CS,\mu)$ goes with usual discretization techniques, however, one must be careful with the error terms to maintain the distributional convergence.

Let $(\CS,\mu)$ be an arbitrary ground space and kernel $\kappa$ satisfies the conditions of the theorem. These define the sequence of random graphs $(G(n,\kappa))_{n\geq1}$. Take any sequence of finite partitions $\CP_m=\{A_{m1},\ldots,A_{mM_m}\}$, $m\geq1$, that satisfy properties \ref{proper::mu-continuity}), \ref{proper::refine}) and \ref{proper::diamto0}) described in Subsection \ref{subsec::ApproxOfKernels}. For each $m$, consider the finite type approximating kernel $\bar\kappa_m$ defined in \eqref{def::aprrox_kernel}, with ground space $(\CS_m,\mu)$ (where $|\CS_m|=M_m$). As a result we obtain the sequence $(G(n,\bar\kappa_m))_{n,m\geq1}$. Note that in the proofs for finite type kernels none of the estimates depend on $\mu_t$ or the cardinality of $\CS_m$, so all the error terms are uniform. The condition $\sup \kappa(x,y)<\infty$ is necessary because it is used in the proof of Lemma \ref{lemma::BINPOIcoupling}.

To prove the results we let $n$ and $m$ tend to $\infty$ simultaneously in a carefully chosen way. For fixed $m$, from the proof of \cite[Lemma 2.1 and Theorem 3.1]{Buhler} it is easy to see that
\begin{equation}\label{eq::Wassersteinerror} \left|\prob{\frac{H_n^{m(n)}- \frac{\wla+1}{\wla} \log n}{\sqrt{\frac{\wla+1}{\wla}\log n}} <x} - \Phi(x)\right|  \le C(\wla)\left(\dfrac{1}{\sqrt {\log a_n}}+\dfrac{1}{\sqrt {\log \frac{n}{a_n}}}\right),
\end{equation}
where $C(\wla)$ is a $\wla$-dependent constant. Thus, with the choice $a_n=\sqrt{n}$ we get the error of order $1/\sqrt{\log n}$.

To be able to couple the graphs $G(n,\kappa)$ and $G(n,\bar\kappa_m)$, we need a fine relation between $\kappa(x,y)$ and $\bar\kappa_m(x,y)$. Since $\kappa$ is uniformly continuous, $\exists \varepsilon_m$ s.t. for all $x,y$, and all $(u,v)\in A_{m,i_m(x)}\times A_{m,i_m(y)}: $
\begin{equation*}
\bar\kappa_m(u,v)\leq \kappa(x,y)(1\pm\varepsilon_m)\quad \text{ if } |u-x|<\delta_m \text{ and } |v-y|<\delta_m,
\end{equation*}
where $\mathrm{diam}(A_{m,i_m(x)})<\delta_m$ and $\mathrm{diam}(A_{m,i_m(y)})<\delta_m$. Abbreviate $A_{m,i_m(x)}$ by $A_{mx}$. In $G(n,\kappa)$ the edge-probability between two vertices of types $x$ and $y$ is $\kappa(x,y)/n$, while in $G(n,\bar\kappa_m)$, between types $A_{mx}$ and $A_{my}$ this probability is $\bar\kappa_m(x,y)/n \in \kappa(x,y)(1\pm\varepsilon_m)$. Thus
\begin{equation*}
\prob{\indd{ \{x,y\}\in e(G(n,\kappa))} \neq \indd{ \{A_{mx},A_{my}\}\in e(G(n,\bar\kappa_m))} } \leq \dfrac{2\varepsilon_m}{n}.
\end{equation*}
Summing over all possible edges, we find for the edge sets that
\begin{equation}\label{eq::couplingedgeset}
\prob{ e(G(n,\kappa)) \neq e(G(n,\bar\kappa_m)) } \leq \dfrac{2n^2\varepsilon_m}{2n}=n\varepsilon_m.
\end{equation}
For a fix $m$. The $\delta_m$ and uniform continuity of $\kappa$ defines $\varepsilon_m$. Let
\[ m(n):= \inf\left\{ m :\, \varepsilon_m n \sqrt{\log n}\le 1 \right\}. \]
 Then, for all $m>m(n)$, the coupling between $G(n, \bar\kappa_m)$ and $G(n, \kappa)$ fails only with probability less than $1/\sqrt{\log n} $.
 Under the coupling, also for the hopcount we have \[ \prob{\CH_n\neq \CH^{m(n)}_n}\le 1/\sqrt {\log n} = o(1). \]
Combining this error bound with the one in \eqref{eq::Wassersteinerror} we obtain that
\[ \begin{aligned} &\prob{\frac{\CH_n- \frac{\wla+1}{\wla} \log n}{\sqrt{\frac{\wla+1}{\wla}\log n}} <x} \\
&= \condprob{\frac{\CH^{m(n)}_n- \frac{\wla+1}{\wla} \log n}{\sqrt{\frac{\wla+1}{\wla}\log n}} <x}{\CH_n = \CH^{m(n)}_n}\left(1-o(1)\right) + \prob{\CH_n\neq \CH^{m(n)}_n}\\
&= \Phi(x)\left(1-O\left(\sqrt{\log n\;}^{-1}\right)\right) + C(\wla) /\sqrt {\log n} + 1/\sqrt {\log n}.  \end{aligned} \]
Finally letting $n\to\infty$ (thus $m(n)\to\infty$ also), we obtain the desired result for the hopcount.

\vspace{2mm}

Now we turn to the proof of the convergence of the shortest weight path.
To avoid conflicting notation we will denote $P_n(\kappa)$ the shortest weight path belonging to $G(n, \kappa)$.
We can use the same coupling argument as for the hopcount to get the estimate
\[ \prob{ P_n(\kappa)\neq P_n( \bar \kappa_{m(n)})}\le \frac{1}{\sqrt{\log n}}\]
We know from the finite type case that
\[ P_n( \bar \kappa_{m(n)} )-\frac{1}{\wla}\log n\toindis  -\frac{1}{\wla}\hat W^x_{(m(n))} \hat W^y_{(m(n))}  -\frac1\wla X,\]
where $X$ is a standard Gumbel variable, $\hat W^i_{(m(n))}, i=x,y\ $ is i.i.d. random variables, distributed as the limit of the martingales arising from the branching processes with kernel $\bar\kappa_{m(n)}$, conditioned on being positive.
Since for all $m$, the row sums of $\bar \kappa_m$ equals $\wla$ i.e. each particle has a Poi$(\wla)$ total number of children, it is not hard to see that the limit of the martingales
\[ W_{(m(n))} \buildrel {d}\over{\equiv} W. \]
This finishes the proof of the distributional convergence of the shortest weight path.
\begin{remark}
Besides Lemma \ref{lemma::summable}, this is the other spot where the generalization for $\kappa$ not satisfying Assumption \eqref{ass:integral_la+1_general} would fail: to get the distributional convergence without Assumption \eqref{ass:integral_la+1_general}, we should show that $W_{(m(n))}\to W$. But the relation of the limits of the approximating Branching Process martingales is not clear at this point to us.
\end{remark}
\subsection{Proof of Theorem \ref{thm::DenseSettingMain}: dense setting}\label{subsec::proofdense}

In the dense setting, where $\wla_n\to\infty$, we have a sequence of kernels $\kappa_n, \, n=1,2,\ldots$. The type $t$ neighbors of a type $s$ vertex have distribution $\eta_{st}^{(n)}\stackrel{d}{=}\text{Bin}(n_t-\delta_{st}, \kappa_n(s,t)/n)$. We avoid the coupling with Poisson random variables done in Lemma \ref{lemma::BINPOIcoupling} by immediately applying the CLT result of \cite{Kharlamov1} to $\Psi_{\kappa_n}$ where the offspring distribution $(D_i^{(n)}|\text{type }s \text{ splits})$ is the sum of independent binomial random variables $\eta_{st}^{(n)},\, t\in\CS$.
 We will apply a similar argument than the one in the general case. Namely, we get that in $\Psi_{\kappa_n}$, for a uniformly picked type-$t$ individual at step $k$
 \[ \left|\Pv\left[ \frac{G_{k}^{(n),t}- \frac{\wla_n+1}{\wla_n} \log k}{\sqrt{\frac{\wla_n+1}{\wla}\log k}}<x\right]-\Phi(x) \right|\le C(\wla)\frac{1}{\sqrt{\log k}}, \]
\vspace{3mm}
which, when considering the connection of the flows at $k=a_n$ and $C_n^{\text{con}}= \Theta(n/a_n) $, will yield an error term of $1/\sqrt{\log n}$ for $\CH_n$.
Considering $\wla_n \to \infty$, the term $(\wla_n + 1)/\wla_n \to 1$ in the denominator and this immediately yields the desired result for the hopcount in Theorem \ref{thm::DenseSettingMain}.

The centering constant can be replaced by $\log n$ if and only if $(\frac{\wla_n+1}{\wla_n}-1)\sqrt{\log n}\to 0$, or equivalently $\sqrt{\log n}=o(\wla_n)$.

To prove the part concerning the length shortest-weight path, with the limit taken diagonally, we need to be a bit more careful to determine the distribution of the split times $\tau_{a_n}$ and $\tau_{C_n^{\text{con}}}$. Since the time between two consecutive splits given the number of alive individuals in the BP is just the minimum of that many independent exponential random variables, we have for every $m$
\begin{equation}\label{eq::splittime_exp}
\tau_m\stackrel{d}{=} \sum_{i=1}^m E_i/S^{(n)}_i,
\end{equation}
with $E_i$ i.i.d. Exp$(1)$. Now let $\xi^{(1)}, \ldots, \xi^{(r)}, \xi_1, \xi_2, \ldots$ denote independent standard normal random variables. Recall that $D_i^{(n)}$ denotes the number of children of the i-th dying particle. Then, by the CLT we have
\begin{equation*}
(D_i^{(n)}|\text{type }s \text{ splits})\toindis \sum_{t\in\CS} \left(\lambda_{st}^{(n)} + \sqrt{\lambda_{st}^{(n)}}\cdot\xi^{(t)}\right) \stackrel{d}{=} \wla_n+1+\sqrt{\wla_n+1}\cdot\xi_i.
\end{equation*}
From here, with the usual notation $S_i^{(n)}=\sum_{j=1}^i D_j^{(n)} - (i-1)$:
\begin{align*}
S_i^{(n)} &\toindis i\wla_n+1+\sqrt{i(\wla_n+1)}\cdot\tilde\xi_i.
\end{align*}
Applying this result to \eqref{eq::splittime_exp} yields $\wla_n \tau_m  \approx \sum_{i=1}^m E_i/i.$

Notice that the sequence $E_m/m, E_{m-1}/m-1, \ldots, E_1/1$ gives in distribution the spacings of the exponential random variables $E_1, \ldots, E_m$. So the sum $\sum_{i=1}^mE_i/i$ is equal in distribution to a random variable $B_m$ that is the maximum of $m$ independent exponentially distributed random variables with rate 1. The distribution function of $B_m$ is
\begin{equation*}
\prob{B_m\leq x} = \left(1-e^{-x}\right)^m.
\end{equation*}
From here we find that
\begin{align*}
\prob{\wla_n \tau^{(x)}_{a_n}-\log a_n\leq x} &= \prob{B_{a_n}\leq x+\log a_n }
= \left(1-e^{-(x+\log a_n)}\right)^{a_n} \\
&= \exp\left(-e^{-x}+O(1/a_n)\right) \to \exp\left(-e^{-x}\right)=\Lambda(x),
\end{align*}
where $\Lambda$ denotes the distribution function of a standard Gumbel random variable. Similarly to the proof in Section \ref{subsec::prooffinite}, we conclude that
\begin{equation*}
(\wla_n \tau^{(x)}_{a_n}-\log a_n,\, \wla_n \tau^{(y)}_{C_n^{\text{con}}}-\log C_n^{\text{con}})\toindis (Y_1, Y_2),
\end{equation*}
where $Y_1$ and $Y_2$ are two independent copies of a standard Gumbel random variable. From Theorem \eqref{eq::minpath} we get that by the continuous mapping theorem
\begin{equation}\label{eq::densepath}\ba
\wla_n \CP_n -\log n &\stackrel{d}{=} \underbrace{\wla_n \tau^{(x)}_{a_n}-\log a_n}_{\toindis Y_1} -\log \frac{n}{a_n}\\+  &\min_i \Big\{  \underbrace{\wla_n \tau^{(y)}_{C_n^{(i)}}-\log C_n^{(i)}}_{\toindis Y_2} + \wla_n\left(\frac{1}{\wla_n}\log C_n^{(i)}+ E_i\right)\Big\}.
\ea
\end{equation}
Now using Proposition \ref{prop::WeakConvOfCn} to see that $C_n^{(i)}=n/a_n P_n^{(i)}$ , with $P_n^{(i)}$ PPP$(\hla_n)$ points, and then Lemma \ref{lemma::gumbeldistribution} yields that the last term in the minimum equals
\[ \wla_n\min_i \left\{\frac{1}{\wla_n}\log C_n^{(i)}+ E_i \right\}= \log \frac{n}{a_n} - X_3 + \log\frac{(\wla_n+1)}{\hla_n}, \]
with $X_3$ a standard Gumbel variable. Then, under Assumption \ref{ass::sorosszeg_la} $\hla_n=\wla_n$, thus the last term vanishes in the limit. Combining this with \eqref{eq::densepath} finishes the proof.

\bibliographystyle{plain}
\bibliography{bibliography}
\addcontentsline{toc}{section}{References}

\end{document}